\def \B {\mathbb{B}}
\def \BB {\mathfrak{B}}
\def \cc {{\rm {cap}}}
\def \dd {{\rm{d}}}
\def \dist {{\rm{dist}}}
\def \de {\partial}
\def \e {\varepsilon }
\def \fe {for every }
\def \loc {{\rm {loc}}}
\def \LIPO {{\rm {Lip}}_0}
\def \L {\mathcal{L}}
\def \LL {\mathfrak{L}}
\def \O {\Omega}
\def \phi {\varphi }
\def \RN {\mathbb{R}^N}
\def \R {\mathbb{R}}
\def \ss {\subseteq}
\def \st {such that }
\def \tes {there exists }
\def \te {there exist }
\def \W {W^1(\O,X)}
\def \WO {W_0^1(\O,X)}
\def \WL {W_{\loc}^1(\Omega,X)}
\DeclareMathOperator{\bint}{\int\hspace{-12pt}---}
\newtheorem{theorem}{Theorem}[section]
\newtheorem{lemma}[theorem]{Lemma}
\newtheorem{proposition}[theorem]{Proposition}
\newtheorem{corollary}[theorem]{Corollary}
\newtheorem{remark}[theorem]{Remark}
\theoremstyle{definition}
\newtheorem{definition}[theorem]{Definition}
\numberwithin{equation}{section}
\begin{document}

\title
{Wiener criterion for X-elliptic operators
}

\author[G. Tralli, F. Uguzzoni]{ $ $ Giulio Tralli, Francesco Uguzzoni}

\address{Dipartimento di Matematica,
         Universit\`{a} degli Studi di Bologna,
         Piazza di Porta S. Donato 5, Bologna 40126, Italy.
         }

\curraddr{}
	\email{giulio.tralli2@unibo.it}
  \email{francesco.uguzzoni@unibo.it}

\date{}

\begin{abstract}
In this note we prove a Wiener criterion of regularity of boundary points for the Dirichlet problem related to $X$-elliptic operators in divergence form enjoying the doubling condition and the Poincar\'e inequality. As a step towards this result, we exhibit some other characterizations of regularity in terms of the capacitary potentials. Finally, we also show that a cone-type criterion holds true in our setting.

\end{abstract}

\maketitle

\let\thefootnote\relax\footnotetext{{\it 2010 Mathematics Subject Classification.} 35J70, 35H20, 31C15, 31E05.}
\let\thefootnote\relax\footnotetext{{\it Keywords}: degenerate-elliptic equations, boundary regularity, Wiener criterion.}
%
\section{Introduction}
\label{intro}
 Let us consider an $X$-elliptic operator (in the sense of Lanconelli and Kogoj \cite{LK}) with measurable coefficients, in the divergence form
  \begin{equation}\label{nomeoper}
 \textstyle
 \L u=\sum_{i,j=1}^N    \de_i (b_{i,j}(x)\de_j u).
 \end{equation}
The operator $\L$ is degenerate-elliptic but its degeneracy is controlled by a family $X$ of vector fields with suitable properties. More explicitly, we assume that the coefficients of the matrix $B(x)=(b_{i,j}(x))=(b_{j,i}(x))$ satisfy the so called $X$-ellipticity condition
    \begin{equation}\label{xell}
    \textstyle
    \lambda\sum_{j=1}^m \langle X_j(x),\xi\rangle^2
    \le\langle B(x)\xi,\xi\rangle
    \le \Lambda\sum_{j=1}^m \langle X_j(x),\xi\rangle^2,
 \end{equation}
 for every $\xi\in\RN$ and for every $x$ in a bounded open set $O\subseteq\RN$. Here $X=(X_1,\ldots,X_m)$ is a family of locally Lipschitz vector fields in $\RN$ with a well-defined control distance $d$ which is continuous w.r.t.\ the Euclidean topology. We suppose that the following local doubling condition holds for the $d$-metric balls $B_s(x)$: for every compact set $K$ of $\RN$, \te $A>1$ and $R_0>0$ \st
 \begin{equation}\label{DC}
 0<|B_{2r}|\le A|B_r|,
 \end{equation}
\fe $d$-ball $B_r$ centered at a point of $K$ and of radius $r\le R_0$. Hereafter we denote by $|E|$ the Lebesgue measure of $E$. We also assume the following Poincar\'e inequality: for every compact set $K$ of $\RN$ \tes a positive constant $C$ \st
 \begin{equation}\label{PI}
  \bint_{\! B_r}|u-u_r|\,
  \le C r\, \bint_{\! B_{2r}}|Xu|,
 \end{equation}
 \fe $C^1$ function $u$ and \fe $d$-ball $B_r$ centered at a point of $K$ and of radius $r\le R_0$. We have used the notations
 \begin{equation*}
  u_r= \bint_{B_r}u
  =\frac{1}{|B_r|}\int_{B_r}u,\quad\mbox{and}\quad \left|Xu\right|^2=\sum_{j=1}^m (X_ju)^2.
 \end{equation*}
 
We remark that, for example, the PDOs of the form
    \begin{equation*}
 \textstyle
 \sum_{i,j=1}^m    X_i^* (a_{i,j}(x)X_j u)
 \qquad\text{and}\qquad
 \sum_{i,j=1}^m    X_i (a_{i,j}(x)X_j u),
 \end{equation*}
 where $(a_{i,j})$ is an $m\times m$ uniformly elliptic symmetric matrix of measurable functions, can be written (up to l.o.t.) as $X$-elliptic operators in the form (\ref{nomeoper})-(\ref{xell}).
 Moreover any second order linear PDO with nonnegative characteristic form and sufficiently smooth coefficients is $X$-elliptic w.r.t.\ a suitable family $X$ of vector fields. Indeed, if the matrix $(a_{i,j})$ is nonnegative definite and $a_{i,j}\in C^2$, then, by a result of Phillips and Sarason \cite{PS}, \tes a nonnegative definite locally Lipschitz matrix $(\alpha_{i,j})$ \st
  $\sum_{i,j=1}^N    a_{i,j}(x)\xi_i\xi_j =\sum_{h=1}^N (\sum_{j=1}^N \alpha_{h,j} \xi_j )^2$ \fe $\xi\in\RN$. We refer to \cite{LK, GL} for more details and comments.
  \\
  \indent
  The notion of $X$-ellipticity was explicitly introduced by Lanconelli and Kogoj in \cite{LK} where a Harnack inequality was proved for the equation $\L u=0$, but the same ideas were already used, for the first time in a non-euclidean context, by Franchi and Lanconelli in \cite{FL1,FL2,FL3}. Several authors have enlightened the fundamental role of conditions \eqref{DC} and \eqref{PI} in the study of PDEs modeled on vector fields, see e.g.\ the survey in \cite{HK} and references therein. See also the recent papers \cite{K, KLd, KS} for some other examples of $X$-elliptic operators (not in the H\"ormander class) enjoying \eqref{DC}-\eqref{PI}.
  Guti\'errez and Lanconelli \cite{GL} established maximum principles and homogeneous Harnack inequalities for $X$-elliptic operators with lower order terms and, in the case of dilation invariant vector fields $X$, nonhomogeneous Harnack inequalities and Liouville theorems. Other Liouville theorems are also proved in \cite{KL}. Some estimates of the Green function for the $X$-elliptic operator $\L$ were proved in \cite{Maz} in the special case that the measure of the $d$-balls behaves like a power $r^Q$ of the radius $r$ and, only recently, in \cite{U} without this restriction. A nonhomogeneous Harnack inequality is also proved in \cite{U}. We also refer to \cite{BBLU, CX, DGL, J, LU, ZF} for other related papers.

Our aim is to prove a Wiener criterion of regularity of boundary points for the Dirichlet problem related to the X-elliptic operator $\L$ in \eqref{nomeoper}. In order to state our results, we need to go into some more details. Let us fix a compact set $K_0$ of $\RN$ with interior containing the closure of $O$ and set $Q=\log_2 A$, where $A$ is the doubling constant in \eqref{DC} which can be assumed \st  $Q>2$. We recall that our assumptions on the vector fields $X_j$ imply the Sobolev inequality
\begin{equation}\label{Sob}
 \| u\|_{\frac{2Q}{Q-2}}
 \le C(D)\, \|Xu\|_2
 \qquad\text{\fe } u\in C_0^1(D)
 \end{equation}
on every open set $D$ with diameter small enough and with closure contained in the interior of $K_0$ (see e.g.\,\cite{HK}). Let now $D\subseteq O$ be a bounded domain supporting the above Sobolev inequality. We shall assume the following condition: 
\begin{equation}\label{uconn}
\text{\emph{the boundaries of the small $d$-balls contained in $D$ are connected}}. 
\end{equation}
This ensures the validity of a crucial Harnack inequality on $d$-rings, which is exploited in \cite{U} to prove two-sided pointwise estimates for the Green function of $\L$. We would like here to point out some more comments on condition \eqref{uconn}. In the (elliptic) case of $N$ vector fields $X_1,\ldots,X_N$ linearly independent at any point, it is known that the small spheres of the control metric are homeomorphic to the euclidean ones and so are connected. But, even in this case, the same property may fail to be true if the sphere is not small. We can convince ourselves of this fact by taking in $\R^2$ vector fields in the form $X_1=\phi(x)\de_{x_1}$, $X_2=\phi(x)\de_{x_2}$, with $0<\delta\leq\phi\leq 1$, $\phi(x_0)=\delta$, and $\phi\equiv 1$ outside a neighborhood of $x_0$, so that the $d$-balls centered far enough from $x_0$ have disconnected boundaries for some radii. In the sub-Riemannian case, it is proved in \cite{B} that small $d$-spheres are homeomorphic to euclidean spheres (and so are connected) if $X$ is a step $2$ distribution of vector fields, or if our vector fields are invariant under some group of dilations (see also \cite{M}).
  
Under the hypotheses we have just fixed, we shall prove (see Theorem \ref{thwin} below) that the $\L$-regularity of a boundary point $y$ of an open set $\O$ compactly contained in $D$ is related to the behavior (near $\rho=0$) of the integral  
$$\int_0^{\delta_y}\cc\left(\overline{B_\rho(y)}\smallsetminus\O\right)\frac{\rho}{\left|B_\rho(y)\right|}\,d\rho.$$ 
We refer to the beginning of Section \ref{cp3} for the definition of the $\L$-capacity $\cc$. By using the doubling property, it is easy to recognize that the behavior of this integral is equivalent to the one of the series $$\sum_k\frac{\lambda^{2k}\cc(\overline{B_{\lambda^k}(y)}\smallsetminus\O)}{\left|B_{\lambda^k}(y)\right|},$$ 
for $0<\lambda<1$ (see \cite{N}).\\
As in the classical elliptic case (see Littman, Stampacchia, and Weinberger \cite{LSW}), our criterion allows us to deduce that the $\L$-regularity of a boundary point of $\O$ does not actually depend on the coefficients $b_{ij}$ of the operator $\L$ but just on the vector fields $X_j$'s (see Corollary \ref{corind}). 

We recall that Wiener tests of regularity were proved in \cite{LSW, GW} for classical elliptic equations with measurable coefficients, in \cite{HH, NS, N} for H\"ormander operators, in \cite{FJK, CW} for degenerate elliptic equations with weights, in \cite{Bir, BM} for Poincar\'e-Dirichlet forms. The approach we follow in the proof of our Wiener criterion is inspired by the papers \cite{LSW, FJK}. In particular, in the position of the problem we adapt the classical formulation given in \cite{LSW}. In our exposition we try to enlight what are the tools really needed in order to get the result. Indeed, we do not use quasicontinuity arguments nor multiple characterizations of capacity as done in \cite{LSW, FJK}.

Our starting point are the estimates of the Green function proved in \cite{U}. We explicitly remark that in \cite{U} the following further hypothesis on $D$ was assumed: there exist $r_1,\,\theta>0$ such that 
 \begin{equation}\label{AI}
 |B_r(x)\setminus D|\ge\theta|B_r(x)|\qquad\mbox{for every }r\in]0,r_1]\mbox{ and } x\in\de D.
 \end{equation}
Moreover, it was also assumed that the conditions \eqref{DC}, \eqref{PI}, and \eqref{uconn} hold true for any $d$-ball contained in $D$. Here these further hypotheses are not necessary because of the local nature of the notion of regularity (provided by Lemma \ref{indue} below) and the fact that any domain can be approximated by domains satisfying condition \eqref{AI} (see \cite[Lemma 3.7]{U}).

  \indent
  This note is organized as follows. 
  In Section \ref{randb} we introduce the notions of barrier and of regularity of boundary points. To this aim we first prove and exploit a Caccioppoli-type estimate. In Section \ref{cp3} we study the relationship between regularity and capacitary potentials, and we prove some characterizations of the regularity. In Section \ref{www} we conclude the proof of our Wiener test. Moreover we also provide a cone criterion which ensures the regularity at a boundary point where condition \eqref{AI} is satisfied.

\section{Regularity and barriers}\label{randb}

For any open set $\O\subseteq D$, we define $W_0^1(\O,X)$ as the closure of $C_0^1(\O)$ w.r.t.\ the norm $\|Xu\|_2$, whereas $W^1(\O,X)=\{u\in L^2(\O)\,|\, Xu\in L^2(\O)\}$ is equipped with the norm $\|u\|_2+\|Xu\|_2$.

By using some results of \emph{good approximation} for functions in $W^1(\O,X)$ (see e.g.\,\cite{FSSC}), one can prove that many general properties about $W^1$-functions hold true also in our setting. For instance, the following facts will be used several times throughout the paper without further comments. Any function in $W^1(\O,X)$ vanishing in a neighborhood of $\de\O$ belongs to $W_0^1(\O,X)$. If $f\in W^1(\O,X)$ and $0\leq f\leq g$ a.e in $\O$ for some $g\in W_0^1(\O,X)$, then $f\in W_0^1(\O,X)$. If $f\in W^1(\O,X)$ is continuous in a neighborhood of $\de\O$ and $f=0$ on $\de\O$, then $f\in W_0^1(\O,X)$.

Let us recall the definition of solution to the equation $\L u=0$. To this aim, let us consider the bilinear form
  \begin{equation}\label{Luv}
 \LL(u,v)=\int_\O \langle B(x)\nabla u,\nabla v\rangle\, dx
 \end{equation}
for $u\in C^1(\O)$ and $v\in C^1_0(\O)$. By using the uniform $X$-ellipticity of $\L$ and the Sobolev inequality \eqref{Sob}, $\LL$ can be extended continuously to $\W \times\WO$. We shall say that a function $u\in\W$ is a (weak) solution to
 $\L u=0$ in $\O$, if $\LL(u,v)=0$ \fe $v\in\WO$. A function $u\in \WL$ will be called a (weak) solution to the same equation in $\Omega$ if it is a weak solution in every domain with closure contained in $\O$. In \cite[Proposition 2.4]{GL}, the authors showed that, for any $h\in\W$, there exists a unique function $u\in\W$ solution of $\L u=0$ in $\O$ such that $u-h\in\WO$. We note that the application $h\mapsto u$ clearly factors through the quotient, i.e. we have $\hat{B}: \frac{\W}{\WO}\rightarrow \W$.\\
For $u\in\W$ and $l\in\R$, we will also say that $u\leq l$ on $\de\O$ if 
$$(u-l)^+=\max\{u-l,0\}\in\WO.$$
We denote $\sup_{\de\O}u=\inf\{l\in\R\,:\,u\leq l\mbox{ on }\de\O\}$. 
In \cite[Theorem 3.1]{GL}, the following maximum principle is proved: if $u\in\W$ is a weak solution of $L u=0$ in $\O$, then we have the inequality
\begin{equation}\label{MP}
 \sup_\O u^+\leq \sup_{\de\O}u^+.
 \end{equation}
By definition, it is not difficult to show that $\sup_{\de\O}\left|h\right|$ defines actually a norm in
$$H=\frac{\{h\in\W\,:\,\sup_{\de\O}\left|h\right|<+\infty\}}{\WO},$$
which we will denote by $\left\|\cdot\right\|_H$. Thus, the maximum principle ensures the boundedness of the map
$$\left.\begin{array}{cccc}
\tilde{B}: & H & \longrightarrow & L^{\infty}(\O)  \\
\, & h &	\mapsto & u=\hat{B}h.
\end{array}\right.$$
To introduce the notion of regularity, we need to associate a solution of $\L u=0$ in $\O$ to any function in $C(\de\O)$. We are going to follow the lines of the procedure of the celebrated paper \cite{LSW} which can be adapted to our context. We start by proving the following Caccioppoli-type estimate.

\begin{lemma}\label{Caccioppoli} Let $u\in\WL$ be a weak solution to $\L u=0$ in $\O$. There exists $C>0$ (independent of $u$) such that, for any compact $K\subset\O$, we have
$$\left\|Xu\right\|_{L^2(K)}\leq\frac{C}{\dist(K,\de\O)}\left\|u\right\|_{L^2(\O)},$$
where $\dist(K,\de\O)$ denotes the $d$-distance between $K$ and $\de\O$.
\end{lemma}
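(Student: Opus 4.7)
My plan is to use the standard Caccioppoli test-function argument, adapted to the $X$-Sobolev setting, with a cutoff function built from the control distance.

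First I would construct an appropriate cutoff. Set $d_0=\dist(K,\de\O)$ and define
$$\eta(x)=\max\!\left(0,\,1-\tfrac{2}{d_0}\dist_d(x,K)\right).$$
The $d$-distance function $x\mapsto \dist_d(x,K)$ is $1$-Lipschitz with respect to $d$, hence by the good-approximation results for $W^1(\cdot,X)$ quoted in the excerpt, it lies in $\WL$ with $|X(\dist_d(\cdot,K))|\le 1$ a.e.; thus $\eta$ is a continuous function, equal to $1$ on $K$, compactly supported in $\O$, belonging to $W^1(\O,X)$ with $|X\eta|\le 2/d_0$ a.e. Because $\spt\eta$ is compact in $\O$ and $u\in\WL$, the product $v=u\eta^{2}$ lies in $\W$ and vanishes in a neighborhood of $\de\O$, so $v\in\WO$ by the remark recalled right after the definition of $W_0^1(\O,X)$.

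Next I would test the equation against $v$. Since $\LL(u,v)=0$, the product rule gives
$$\int_{\O}\eta^{2}\langle B\nabla u,\nabla u\rangle\,dx=-2\int_{\O}u\,\eta\,\langle B\nabla u,\nabla\eta\rangle\,dx.$$
The bilinear form $(\xi,\eta)\mapsto\langle B(x)\xi,\eta\rangle$ is symmetric and nonnegative definite, so the Cauchy--Schwarz inequality applies pointwise. Combining it with Young's inequality with parameter $\tfrac12$, I can absorb half of the gradient integral into the left-hand side and obtain
$$\int_{\O}\eta^{2}\langle B\nabla u,\nabla u\rangle\,dx\le 4\int_{\O}u^{2}\langle B\nabla\eta,\nabla\eta\rangle\,dx.$$
Applying the two-sided $X$-ellipticity bound \eqref{xell} to both sides (lower bound $\lambda|Xu|^2$ on the left, upper bound $\Lambda|X\eta|^2$ on the right) and using $\eta\equiv 1$ on $K$ together with $|X\eta|\le 2/d_0$, I get
$$\lambda\int_{K}|Xu|^{2}\,dx\le \frac{16\,\Lambda}{d_0^{2}}\int_{\O}u^{2}\,dx,$$
which is exactly the claimed estimate with $C=4\sqrt{\Lambda/\lambda}$.

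The main obstacle, rather than the computation itself, is the justification that $\eta$ and $v=u\eta^{2}$ are admissible in the weak formulation: one needs the $1$-Lipschitzianity of $\dist_d(\cdot,K)$ with respect to the control metric to translate into the a.e.\ bound $|X(\dist_d(\cdot,K))|\le 1$, and one needs the chain/product rule for $u\eta^{2}$ in $W^{1}(\O,X)$ so that $\nabla(u\eta^{2})=\eta^{2}\nabla u+2u\eta\,\nabla\eta$ and $v\in\WO$. Both points rely on the good-approximation machinery for $W^{1}(\O,X)$ (e.g.\ of Franchi--Serapioni--Serra Cassano) that the authors have flagged at the start of Section~\ref{randb}; once these are granted, the rest of the argument is the classical Caccioppoli calculation in its shortest form.
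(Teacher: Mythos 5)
Your argument is correct, but it takes a genuinely different route from the paper's. The paper first proves the estimate on concentric metric balls (inequality \eqref{caccio}), using the cut-off functions for $d$-balls provided by \cite{FSSC}, and then passes to an arbitrary compact $K$ through a Whitney-type covering of $\O$ with bounded overlap (\cite[Lemma 2.15]{FSSC}); you instead build one global cut-off $\eta=\max\bigl(0,1-\tfrac{2}{d_0}\dist_d(\cdot,K)\bigr)$ adapted directly to $K$ and $\de\O$ and run the Caccioppoli computation a single time. The ingredient you need in exchange is the fact that a function which is $L$-Lipschitz with respect to the control distance belongs to $W^{1,\infty}_{\loc}$ with $|Xf|\le L$ a.e.; strictly speaking this is not a consequence of the Meyers--Serrin type ``good approximation'' results you invoke, but it is available in the same reference \cite{FSSC}, and the paper itself uses it for $\Phi(x)=d(x,y)$ in the proof of Proposition \ref{eqbarreg}, so it is within the assumed toolkit (you should also note that $d_0=\dist_d(K,\de\O)>0$ and that $\{\dist_d(\cdot,K)\le d_0/2\}\cap\overline{\O}$ is a compact subset of $\O$, which follows from the continuity of $d$ with respect to the Euclidean topology and makes $u\eta^2$ an admissible test function exactly as you claim). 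What each approach buys: yours is shorter and yields the explicit constant $C=4\sqrt{\Lambda/\lambda}$, independent of any covering constant, while the paper's only requires the explicitly quoted existence of cut-offs on metric balls plus the covering lemma, and its intermediate two-radii estimate \eqref{caccio} is of independent use. The core computation --- testing with $\eta^2u$, Cauchy--Schwarz for the nonnegative form $B$, absorption, and then the two-sided bound \eqref{xell} --- is the same in both proofs, and your constants check out.
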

\begin{proof} First we prove that, if $0<\rho<r$,
\begin{equation}\label{caccio}
\left\|Xu\right\|_{L^2(B_\rho)}\leq\frac{C}{r-\rho}\left\|u\right\|_{L^2(B_r)}
\end{equation}
for any ball $B_r$ compactly contained in $\O$ (and thus for any ball in $\O$). Let us take a continuous nonnegative cut-off function $\eta$ satisfying
$$\eta\equiv1\mbox{  in }B_\rho,\quad\eta\equiv0\mbox{  outside }B_r, \mbox{   and }\left|X\eta\right|\leq\frac{c}{r-\rho} \mbox{ a.e.}$$
for some positive constant $c$ (for the existence of such a function see e.g. \cite{FSSC}). By the fact that $\LL(u,\eta^2u)=0$ and condition (\ref{xell}), we get
$$\lambda\int_{B_r}\eta^2\left|Xu\right|^2\leq 2\Lambda\int_{B_r}\eta\left|u\right|\left|Xu\right|\left|X\eta\right|
\leq\frac{\lambda}{2}\int_{B_r}\eta^2\left|Xu\right|^2+\frac{2\Lambda^2}{\lambda}\int_{B_r}u^2\left|X\eta\right|^2.$$
Hence we have
$$\int_{B_\rho}\left|Xu\right|^2\leq\int_{B_r}\eta^2\left|Xu\right|^2\leq\frac{4\Lambda^2}{\lambda^2}\int_{B_r}u^2\left|X\eta\right|^2\leq\frac{4\Lambda^2}{\lambda^2}\frac{c^2}{(r-\rho)^2}\int_{B_r}u^2$$
which proves (\ref{caccio}). We now use a covering argument to conclude the proof. Let $\{B_{r_j}(x_j)\}$ be a countable family of $d$-metric balls such that
$$\O=\bigcup_jB_{r_j}(x_j),\quad r_j=\frac{3}{20}\dist(B_{\frac{r_j}{3}}(x_j),\de\O),\,\,\mbox{and}\quad\sum_{j}\chi_{B_{\frac{4}{3}r_j}(x_j)}\leq C_1\chi_{\O}$$
for some positive constant $C_1$ (see \cite[Lemma 2.15]{FSSC}), where $\chi_E$ denotes the characteristic function of the set $E$. Let us fix a compact set $K\subset\O$ and let $F=\{j\in\mathbb{N}\,:\,B_{r_j}(x_j)\cap K\neq \emptyset\}$. Thus, for any $j\in F$, we have
$$\dist(K,\de\O)\leq 2r_j+\dist(\overline{B_{r_j}(x_j)},\de\O)\leq\frac{26}{3}r_j.$$
Hence, since $B_{\frac{4}{3}r_j}(x_j)\subseteq\O$ and by (\ref{caccio}), we get
$$\left\|Xu\right\|^2_{L^2(K)}\leq\sum_{j\in F}\left\|Xu\right\|^2_{L^2(B_{r_j}(x_j))}\leq\sum_{j\in F}\frac{C^2}{(\frac{4}{3}r_j-r_j)^2}\left\|u\right\|^2_{L^2(B_{\frac{4}{3}r_j}(x_j))}
\leq C_1\frac{(26C)^2}{(\dist(K,\de\O))^2}\left\|u\right\|^2_{L^2(\O)}.$$
\qed\end{proof}

Let us put
$$\B=\{u\in\WL\,:\,\left\|u\right\|_{\B}=\left\|u\right\|_{L^\infty(\O)}+\sup_{K\subset\subset\O}\dist(K,\de\O)\left\|Xu\right\|_{L^2(K)}<+\infty\}.$$
The space ($\B, \left\|\cdot\right\|_{\B}$) is a Banach space, and the inclusion in $\WL$ is continuous. The previous lemma tells us that the linear map $\tilde{B}: H  \longrightarrow \B$ is well-defined and bounded.\\
Let us consider also the space $\mathcal{C}=\frac{\W\cap C(\overline{\O})}{C_0(\overline{\O})}$, where $C_0(\overline{\O})$ is the space of continuous functions in $\overline{\O}$ vanishing on $\de\O$. By Tietze extension theorem, it can be thought as a subspace of ($C(\de\O), \max_{\de\O}\left|\cdot\right|$). Moreover, by Stone-Weierstrass theorem, $\mathcal{C}$ is dense in $C(\de\O)$.\\
By the inclusion map $\W\cap C(\overline{\O})\hookrightarrow\{h\in\W\,:\,\sup_{\de\O}\left|h\right|<+\infty\}$, we have a well-defined map $j:\mathcal{C}\rightarrow H$ with $\left\|j\varphi\right\|_H\leq\max_{\de\O}\left|\varphi\right|$. We define $\BB=\tilde{B}\circ j$. By continuous linear extension, we can extend it to a linear and bounded map $$\BB: C(\de\O)\rightarrow\B.$$
By (\ref{MP}), it is easy to see that $\sup_{\O}\BB\varphi\leq\max_{\de\O}\varphi$ for any $\phi\in C(\de\O)$. Furthermore, by density, the $\WL$-function $\BB\varphi$ is a weak solution to $\L (\BB\varphi)=0$ in $\Omega$ for every $\varphi$. The Harnack inequality in \cite[Theorem 4.1]{GL} ensures the continuity in $\O$ of the functions $\BB\varphi$. Thus, we are finally in the position to give the definition of regular points.

\begin{definition}\label{reg} A point $y\in\de\O$ is said to be ($\L$-)regular if, for any $\varphi\in C(\de\O)$, we have
$$\lim_{\O\ni x\rightarrow y}\BB\varphi(x)=\varphi(y).$$
\end{definition}

Arguing as in \cite[Lemma 3.1]{LSW}, we can prove that it is enough to check the regularity condition just for all the functions $\varphi\in \mathcal{C}$.\\
The notion of regularity is classically related with barrier functions. It seems natural to model our definition of barrier on the maximum principle (\ref{MP}).

\begin{definition}\label{barriera} A function $V_y\in\W$ is called a barrier at $y\in\de\O$ if
\begin{itemize}
\item[(i)] $V_y$ is a solution to $\L V_y=0$ in $\O$;
\item[(ii)] $\lim_{\O\ni x\rightarrow y}V_y(x)=0$;
\item[(iii)] for every $\rho>0$ (small enough) there exists $\psi\in C_0^{\infty}(\RN\smallsetminus B_{\rho}(y))$, $0\leq\psi\leq 1$ with $\psi\equiv 1$ in a neighborhood of $\de\O\smallsetminus B_{2\rho}(y)$ and there exists $m>0$ such that $\psi(m-V_y)^+\in\WO.$
\end{itemize}
\end{definition}

\begin{remark}\label{barpos} We note that condition (ii) in the previous definition makes sense since $V_y$ is continuous by condition (i). Moreover, we stress that a barrier function has to be nonnegative in $\O$. As a matter of fact, for any positive $\e$, there exists $\rho$ such that $V_y(x)\geq -\e$ if $x\in\O$ with $d(x,y)\leq 2\rho$. Fixed such a $\rho$, let us consider $\psi$ and $m$ as in condition (iii). We have
$$0\leq(-\e-V_y)^+=\psi(-\e-V_y)^++(1-\psi)(-\e-V_y)^+\leq\psi(m-V_y)^+\in\WO.$$
Thus, $(-\e-V_y)^+\in\WO$ and $\L(-\e-V_y)=0$. By (\ref{MP}), we get $V_y\geq-\e$ in $\O$ for any $\e>0$.
\end{remark}

\begin{proposition}\label{eqbarreg} A point $y\in\de\O$ is regular if and only if there exists a barrier $V_y$ at $y$.
\end{proposition}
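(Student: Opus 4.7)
The plan is to prove both implications via the weak maximum principle \eqref{MP}.

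\emph{Barrier $\Rightarrow$ regularity.} By the density remark recalled right before the statement, I only need to verify the limit for test data $\varphi\in\mathcal{C}$. Set $\tilde\varphi=\varphi-\varphi(y)\in\W\cap C(\overline{\O})$, $u=\BB\tilde\varphi$, and $M=\max_{\overline{\O}}|\tilde\varphi|$; fix $\e>0$, choose $\rho>0$ so small that $|\tilde\varphi|<\e$ on $\overline{\O}\cap B_{2\rho}(y)$, and take $\psi$, $m$ provided by condition (iii) of the barrier. My plan is to sandwich $u$ between $\pm(\e+\tfrac{M}{m}V_y)$ and then let $x\to y$ using (ii). Both bounds are weak solutions of $\L=0$, and since $u-\tilde\varphi\in\WO$, applying \eqref{MP} to $u-\e-\tfrac{M}{m}V_y$ reduces the upper inequality to showing
$$f:=\left(\tilde\varphi-\e-\tfrac{M}{m}V_y\right)^+\in\WO.$$

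The main obstacle of the proof is precisely this $\WO$-membership. My plan is to split $f=\psi f+(1-\psi)f$. For $\psi f$: from $\psi\ge 0$ and $\tilde\varphi\le M$ one gets $\psi(\tilde\varphi-\e-\tfrac{M}{m}V_y)\le\tfrac{M}{m}\psi(m-V_y)$, and taking positive parts gives the pointwise estimate $\psi f\le \tfrac{M}{m}\psi(m-V_y)^+$; the right-hand side lies in $\WO$ by (iii), so the rule ``$0\le h\le g\in\WO\Rightarrow h\in\WO$'' yields $\psi f\in\WO$. For $(1-\psi)f$: on $B_{2\rho}(y)\cap\O$ one has $\tilde\varphi<\e$ and $V_y\ge 0$ (Remark \ref{barpos}), so $f\equiv 0$; on the neighborhood $U$ of $\de\O\setminus B_{2\rho}(y)$ given by (iii), $\psi\equiv 1$. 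Thus $(1-\psi)f$ vanishes in the open neighborhood $W=(B_{2\rho}(y)\cup U)\cap\O$ of $\de\O$, hence $(1-\psi)f\in\WO$. Therefore $f\in\WO$, \eqref{MP} gives $u\le\e+\tfrac{M}{m}V_y$ in $\O$, and (ii) yields $\limsup_{x\to y}u(x)\le\e$. Symmetry and the arbitrariness of $\e$ give $u(x)\to 0$.

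\emph{Regularity $\Rightarrow$ barrier.} Conversely, assuming $y$ regular, I take $V_y:=\BB\varphi$ for some $\varphi\in\mathcal{C}$ with $\varphi\ge 0$ on $\de\O$, $\varphi(y)=0$, and $\varphi>0$ on $\de\O\setminus\{y\}$ (for instance, an extension to $\overline{\O}$ of $\min(1,d(\cdot,y))|_{\de\O}$, which lies in $\W\cap C(\overline{\O})$ by the local Lipschitz character of $d(\cdot,y)$ w.r.t.\ $X$). Condition (i) is immediate, (ii) is precisely the regularity of $y$ applied to $\varphi$, and $V_y\ge 0$ follows from \eqref{MP} applied to $-V_y$, whose boundary trace is $-\varphi\le 0$. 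For (iii), fix small $\rho>0$, pick $\psi\in C_0^\infty(\RN\setminus B_\rho(y))$ as required, and set $m:=\min_{\de\O\setminus B_\rho(y)}\varphi>0$. From the subadditive estimate
$$\psi(m-V_y)^+\le \psi(m-\varphi)^+ + \psi\,|V_y-\varphi|,$$
it suffices to check that both summands on the right belong to $\WO$. The second does because $V_y-\varphi\in\WO$ and multiplication by the bounded $C^\infty$ function $\psi$ preserves $\WO$. The first is continuous on $\overline{\O}$ and vanishes on $\de\O$ (if $\psi(x)>0$ at $x\in\de\O$, then $x\notin B_\rho(y)$, so $\varphi(x)\ge m$), hence lies in $\WO$ by the criterion ``continuous in a neighborhood of $\de\O$ and $=0$ on $\de\O$ implies $\in\WO$''. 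The comparison rule then gives $\psi(m-V_y)^+\in\WO$, proving (iii).
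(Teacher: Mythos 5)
Your proof is correct and follows essentially the same route as the paper: for regularity $\Rightarrow$ barrier you take $V_y=\BB$ of (a truncation of) the distance function $d(\cdot,y)$ and verify (iii) using $V_y-\varphi\in\WO$, and for barrier $\Rightarrow$ regularity you sandwich $\BB\varphi-\varphi(y)$ between $\pm\left(\e+\tfrac{M}{m}V_y\right)$ via the maximum principle \eqref{MP} and the $\WO$-comparison facts, exactly as in the paper's argument. The only deviations are bookkeeping choices: your decomposition $f=\psi f+(1-\psi)f$, with $(1-\psi)f$ vanishing in a neighborhood of $\de\O$, makes explicit the step the paper compresses into the single estimate $h\le\psi\bigl(2M-\tfrac{2M}{m}V_y\bigr)^+$, and your splitting $\psi(m-V_y)^+\le\psi(m-\varphi)^++\psi|V_y-\varphi|$ replaces the paper's choice $m=\rho$ with $\psi$ vanishing on $\overline{B_\rho(y)}$, for which the first term is identically zero.
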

\begin{proof} Suppose first $y$ is regular. Put $\Phi(x)=d(x,y)$. The function $\Phi$ belongs to $\W\cap C(\overline{\O})$ (see e.g. \cite{FSSC}). We denote by $\phi$ its restriction to $\de\O$ and we consider $V_y=\BB\phi$. By definition, since $\phi\in \mathcal{C}$, $V_y\in\W$ (and not just in $\WL$). The function $V_y$ is actually a barrier. Conditions (i) and (ii) follow indeed by construction and regularity assumption. Moreover, if we fix a positive $\rho$ and we consider a $C_0^{\infty}$ nonnegative function $\psi$ which vanishes in $\overline{B_\rho(y)}$, we have $0\leq\psi(\rho-V_y)^+\leq\psi(\rho-\Phi)^++\psi(\Phi-V_y)^+=\psi(\Phi-V_y)^+\in\WO$. Thus, even condition (iii) is satisfied.\\
Suppose now the existence of a barrier $V_y$. For what we said after Definition \ref{reg}, it is enough to check that $\BB\phi(x)\rightarrow\phi(y)$ as $x\rightarrow y$ for any $\phi\in \mathcal{C}$. Let us consider $\Phi\in\W\cap C(\overline{\O})$ whose restriction is a fixed $\phi\in \mathcal{C}$. Put $M=\max_{\overline{\O}}\left|\Phi\right|$. By continuity, for any positive $\e$, there exists $\rho$ such that $\left|\Phi(x)-\Phi(y)\right|\leq\e$ if $x\in\overline{\O}$ with $d(x,y)\leq 2\rho$. Fixed such a $\rho$, since we have a barrier we can consider $\psi$ and $m$ as in condition (iii). We get $0\leq h(x):=(\Phi(x)-\Phi(y)-\e-\frac{2M}{m}V_y(x))^+\leq\psi(2M-\frac{2M}{m}V_y)^+\in\WO$ by using that $V_y\geq 0$. Hence we have
$0\leq(\BB\phi-\Phi(y)-\e-\frac{2M}{m}V_y)^+\leq h+(\BB\phi-\Phi)^+\in\WO$. Since $\L(\BB\phi-\Phi(y)-\e-\frac{2M}{m}V_y)=0$ in $\O$, the maximum principle implies that $\BB\phi\leq\Phi(y)+\e+\frac{2M}{m}V_y$ in $\O$. Considering also $\tilde{h}=(\Phi(y)-\Phi-\e-\frac{2M}{m}V_y)^+$, we get at the end
$$\left|\BB\phi-\Phi(y)\right|\leq\e+\frac{2M}{m}V_y\,\,\,\mbox{ in }\O.$$
The fact that $V_y(x)\rightarrow 0$ as $x\rightarrow y$ and the arbitrariness of $\e$ complete the proof.\qed
\end{proof}

\section{Capacitary potentials and regularity}\label{cp3}

Given a compact set $K\ss D$, we define the ($\L$-)capacity of $K$ in $D$ as
 $$
 \cc(K)=\inf\{\LL(u,u)\,|\,u\in W_0^1(D,X),\,u\ge 1\text{ on }K\text{ in the $W_0^1(D,X)$ sense}\}.
 $$
 We say that $u\ge c$ on $K$ in the $W_0^1(D,X)$ sense, if \tes a sequence $\phi_j\in\LIPO(D)$ (Lipschitz functions compactly supported in $D$) \st
 $\phi_j\ge c$ on $K$ and $\phi_j\to u$ in $W^1(D,X)$. We shall also say that $u=c$ on $K$ in the $W_0^1$ sense if both $u\ge c$ and
 $-u\ge -c$ on $K$ in the $W_0^1$ sense. Because of the $X$-ellipticity condition, it can be showed that there exists a unique $u_0\in W_0^1(D,X)$ with $u\ge 1$ on $K$ in the $W_0^1(D,X)$ sense such that $\cc(K)=\LL(u_0,u_0)$. The function $u_0$ is called the capacitary potential of $K$ in $D$. It is also a weak solution to $\L u_0=0$ in $D\setminus K$. Furthermore, there exists a positive measure $\mu_0$ supported on $\de K$ \st $\LL(u_0,\phi)=\int\phi\,\dd\mu_0$ \fe $\phi\in C_0^\infty(D)$. The measure $\mu_0$ is called the capacitary distribution of $K$ in $D$ and it holds $\mu_0(K)=\LL(u_0,u_0)=\cc(K)$ (see \cite[Section 3]{U}).

\begin{remark}\label{urem}
Let us consider a function $h\in W^1(D,X)\cap C(\overline{D})$ such that $h\equiv 1$ in a neighborhood of $K$ and $h\equiv 0$ in a neighborhood of $\de D$. With our notations and with $\O=D\smallsetminus K$, we have
$$u_0=\BB(h_{|\de\O}).$$
As a matter of fact, by \cite[Proposition 3.1]{U} there exists a sequence $\phi_j\in \rm{Lip_0}(D)$ with $\phi_j\equiv 1$ in $K$ such that $\phi_j\rightarrow u_0$ in $W^1(D,X)$. The functions $\phi_j-h$ vanish on $K$ and in a neighborhood of $\de D$. We thus have $\phi_j-h\in W_0^1(\O,X)$ and hence $u_0\in W_0^1(\O,X)$.
\end{remark}

With these notions, we can show that the existence of our barriers (and so the regularity) at some point $y$ is a local issue. 

\begin{lemma}\label{indue} Let $\O,\O_0$ be open sets compactly contained in $D$, with $\O\subseteq\O_0$. Let $y\in\de\O\cap\de\O_0$ and suppose there exists $\delta>0$ such that $\O\cap B_\delta(y)=\O_0\cap B_\delta(y)$. Then $y$ is regular for $\O$ if and only if it is regular for $\O_0$.
\end{lemma}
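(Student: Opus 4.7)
The plan is to invoke Proposition~\ref{eqbarreg} to reduce the statement to an equivalence for the existence of a barrier at $y$. The geometric setup is that $\O\ss\O_0$ and $\O\cap B_\delta(y)=\O_0\cap B_\delta(y)$ force $\de\O\cap B_\delta(y)=\de\O_0\cap B_\delta(y)$, so $\de\O$ splits as $\Gamma_i\cup\Gamma_e$, where $\Gamma_i:=\de\O\cap\O_0$ is compact and disjoint from $\overline{B_\delta(y)}$, and $\Gamma_e:=\de\O\cap\de\O_0\ni y$. Both implications then reduce to transferring a barrier across this shared local picture.

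For the direction $(\Leftarrow)$, given a barrier $V^{(0)}$ for $\O_0$, I would set $V:=V^{(0)}|_\O$. Conditions (i) and (ii) of Definition~\ref{barriera} transfer immediately. For (iii), the cutoff has to equal $1$ also near $\Gamma_i$. By the Harnack inequality of \cite{GL}, the nontrivial nonnegative solution $V^{(0)}$ is strictly positive in $\O_0$, so there is $c_0>0$ with $V^{(0)}\geq c_0$ in some neighborhood $W'$ of the compact set $\Gamma_i$. Given the barrier data $(\psi_0,m_0)$ for $V^{(0)}$ with $B_{2\rho}(y)\cap\Gamma_i=\emptyset$, I would pick $m:=\min(m_0,c_0/2)$ and $\psi:=\psi_0+(1-\psi_0)\eta$, where $\eta\in C_0^\infty(\RN\smallsetminus\overline{B_\rho(y)})$ equals $1$ in a smaller neighborhood of $\Gamma_i$ and has $\spt\eta\ss W'$. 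Since $(m-V^{(0)})^+$ vanishes on $\spt\eta$, the products collapse to $\psi(m-V^{(0)})^+=\psi_0(m-V^{(0)})^+$, which is controlled by $\psi_0(m_0-V^{(0)})^+\in W_0^1(\O_0,X)$ and vanishes in $W'$. Approximating this latter function by $\LIPO(\O_0)$-functions and multiplying by a smooth cutoff vanishing in a small neighborhood of $\Gamma_i$ produces $\LIPO(\O)$-approximants (their supports avoiding both $\Gamma_i$ and $\de\O_0\supseteq\Gamma_e$), yielding $\psi(m-V)^+\in\WO$.

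For the direction $(\Rightarrow)$, given a barrier $V$ for $\O$, I would set $\Phi^{(0)}(x):=d(x,y)$ and $V^{(0)}:=\BB_{\O_0}(\Phi^{(0)}|_{\de\O_0})$. Conditions (i) and (iii) for $V^{(0)}$ at $y$ for $\O_0$ follow verbatim from the computation in the proof of Proposition~\ref{eqbarreg}. For (ii), fix small $\rho>0$ with barrier data $(\psi_V,m_V)$ for $V$, let $M:=\sup_{\O_0}V^{(0)}$, $C:=M/m_V$, and consider $u:=V^{(0)}|_\O-CV-2\rho\in\W$, which solves $\L u=0$ in $\O$. The goal is $u^+\in\WO$, so as to apply the maximum principle \eqref{MP}. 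I would split $u^+=u^+\psi_V+u^+(1-\psi_V)$: since $V^{(0)}\leq M=Cm_V$ gives $u^+\leq C(m_V-V)^+$, the first summand is dominated by $C\psi_V(m_V-V)^+\in\WO$ via condition (iii) for $V$; for the second, the estimate $u^+\leq (V^{(0)}-2\rho)^+\leq (V^{(0)}-\Phi^{(0)})^+ +(\Phi^{(0)}-2\rho)^+$ further splits it into $(\Phi^{(0)}-2\rho)^+(1-\psi_V)$, which is Lipschitz with compact support in $\O$ (the first factor vanishes in $B_{2\rho}(y)$, while the second vanishes in a neighborhood of $\de\O\smallsetminus B_{2\rho}(y)$), and $(V^{(0)}-\Phi^{(0)})^+(1-\psi_V)$, which lies in $W_0^1(\O_0,X)$ and vanishes pointwise in a neighborhood of $\Gamma_i$ (where $1-\psi_V\equiv 0$), so its restriction to $\O$ belongs to $\WO$ by the same cutoff-and-restrict construction as in $(\Leftarrow)$. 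Monotonicity then yields $u^+\in\WO$, and \eqref{MP} gives $V^{(0)}\leq CV+2\rho$ in $\O$. Sending $x\to y$ (in $\O$ or in $\O_0$, since the two coincide inside $B_\delta(y)$) and using $V(x)\to 0$ leads to $\limsup V^{(0)}\leq 2\rho$, and the arbitrariness of $\rho$ completes (ii).

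The main obstacle in both directions is establishing $\WO$-membership of truncated products that are naturally elements of $W_0^1(\O_0,X)$. The recurring technical trick is that a $W_0^1(\O_0,X)$-function vanishing pointwise in a neighborhood of $\Gamma_i$ admits $\LIPO(\O_0)$-approximants that also vanish pointwise near $\Gamma_i$; their restrictions to $\O$ are then Lipschitz and compactly supported in $\O$, hence lie in $\WO$.
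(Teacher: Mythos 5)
Your direction $(\Rightarrow)$ (regularity for $\O$ implies regularity for $\O_0$) is essentially the paper's own argument: take $V^{(0)}=\BB_0\big(d(\cdot,y)|_{\de\O_0}\big)$, get (i) and (iii) from the computation in Proposition \ref{eqbarreg}, and get (ii) by comparing $V^{(0)}$ with the barrier of $\O$ through the maximum principle \eqref{MP} applied in $\O$; your bookkeeping with $(\psi_V,m_V)$ and the extra $2\rho$ is a harmless variant of the paper's comparison $V_0\leq CV_y$, and the ``cutoff-and-restrict'' step works there precisely because $(V^{(0)}-\Phi)^+(1-\psi_V)$ vanishes identically on a full neighborhood of $\de\O\smallsetminus B_{2\rho}(y)$.

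The direction $(\Leftarrow)$ has a genuine gap. You assert that $\Gamma_i=\de\O\cap\O_0$ is compact and then extract, via Harnack, a uniform bound $V^{(0)}\geq c_0>0$ on a neighborhood $W'$ of $\Gamma_i$. But $\Gamma_i$ is the intersection of a closed set with an open set and need not be closed: it can accumulate on $\de\O_0$ (e.g.\ $\O_0$ a ball and $\O=\O_0$ minus a relatively closed slit, or minus a sequence of closed balls whose limit point lies on $\de\O_0$, all far from $y$; the hypotheses of the lemma are satisfied). Interior Harnack chains give positivity of $V^{(0)}$ only on compact subsets of $\O_0$ (and even there only after checking $V^{(0)}\not\equiv 0$ on the relevant components); near the points of $\overline{\Gamma_i}\cap\de\O_0$ the only information on the barrier of $\O_0$ is the weak condition (iii), which yields no pointwise lower bound without quasicontinuity/capacity arguments that the paper deliberately avoids and that are not available off the shelf in this setting. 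Hence the identity $\psi(m-V^{(0)})^+=\psi_0(m-V^{(0)})^+$, on which your construction rests, is unjustified, and more fundamentally the restriction of a barrier for $\O_0$ need not be (provably) a barrier for $\O$: condition (iii) for $\O$ forces the cutoff to equal $1$ near all of $\Gamma_i$, so it implicitly requires $V^{(0)}$ to stay above a positive constant near the pieces of $\Gamma_i$ that approach $\de\O_0$, and nothing guarantees this. The paper avoids restricting the barrier altogether: it embeds $\O_0$ in an auxiliary domain $D_0$ satisfying \eqref{AI}, takes the capacitary potentials $u_k$ of $\overline{B_{\rho/k}(y)}\smallsetminus\O_0$ in $D_0$, and uses $V_y=\sum_{k\geq 2}2^{-k}(1-u_k)$; each $1-u_k$ is $\L$-harmonic and continuous across all of $\de\O$ away from $y$ (it lives on $D_0\smallsetminus K_{\rho/k}$), and the strong maximum principle supplies the uniform lower bound $m_\sigma$ on $\overline{D_0}\smallsetminus B_\sigma(y)$ that makes condition (iii) trivial, while condition (ii) follows from the already-proved direction together with Remark \ref{urem}. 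That global construction is the idea your proposal is missing.
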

\begin{proof} Suppose first $y$ is regular for $\O$. If $\Phi(x)=d(x,y)$, by the proof of Proposition \ref{eqbarreg}, the function $V_y=\BB(\Phi_{|\de\O})$ is a barrier. We want that $V_0=\BB_0(\Phi_{|\de\O_0})$ is a barrier at $y$ for $\O_0$, where $\BB_0$ denotes the operator we built up in the previous section related to $\O_0$. To prove this, we will not exploit that $\O_0$ is compactly contained in $D$. We claim there exists $C>0$ such that $V_0\leq C V_y$ in $\O$: so we get condition (ii) in Definition \ref{barriera}, which is the one missing. Since $V_0$ in $\O_0$ is bounded by construction, we can take $M>\frac{\delta}{2}$ such that $V_0\leq M$. Let us consider $\psi\in C_0^{\infty}(\RN\smallsetminus B_{\frac{\delta}{2}}(y))$, $0\leq\psi\leq 1$, $\psi\equiv 1$ in a neighborhood of $\overline{\O_0}\smallsetminus B_{\delta}(y)$. If $C=\frac{2M}{\delta}$, we have
$$(V_0-CV_y)^+\leq C\psi\left(\frac{\delta}{2}-V_y\right)^++(1-\psi)(V_0-\Phi)^++(1-\psi)(\Phi-V_y)^+.$$
By noting that $V_0-\Phi\in W_0^1(\O_0)$ and $(1-\psi)$ is supported in $B_\delta(y)$, we get that the right hand side of the above inequality belongs to $\WO$ and so does $(V_0-CV_y)^+$. By (\ref{MP}), we deduce the claim and the fact that $y$ is regular even for $\O_0$. \\
Suppose now that $y$ is regular for $\O_0$. Since $\O_0$ is compactly contained in $D$, we can find $D_0\subseteq D$ satisfying condition \eqref{AI} with $\O_0\subset D_0$. Fix $\rho<\delta$ such that $\overline{B_\rho(y)}\subset D_0$. By the first part of the proof, $y$ is regular even for $D_0\smallsetminus \left(\overline{B_{\frac{\rho}{k}}(y)}\smallsetminus\O_0\right)$ (for all positive integers $k$). We denote by $u_k$ the capacitary potentials of $\overline{B_\frac{\rho}{k}(y)}\smallsetminus\O_0$ in $D_0$. We want to prove that $V_y:=\sum_{k=2}^{+\infty}2^{-k}(1-u_k)$ defines a barrier in $\O$. Let us first note that $\overline{B_r(y)}\smallsetminus\O=\overline{B_r(y)}\smallsetminus\O_0$ for every positive $r\leq\rho$, and that $\L V_y=0$ in $\O$. Moreover, the assumptions on $D_0$ assure that $u_k$ are (equal a.e. to) continuous functions on $\overline{D_0}\smallsetminus\left(\overline{B_\frac{\rho}{k}(y)}\smallsetminus\O_0\right)$ (see \cite[Lemma 2.5]{U}), vanishing on $\de D_0$ and bounded by $1$. Hence, for any positive $\sigma$, consider an integer $k_0$ such that $B_{\frac{\rho}{k_0}}(y)\subset B_{\frac{\sigma}{2}}(y)$ and a positive number $m_\sigma$ with $m_\sigma\leq 2^{-k_0}(1-u_{\frac{\rho}{k_0}}(x))$ for a.e. $x\in\overline{D_0}\smallsetminus B_\sigma(y)$. We stress that the existence of such $m_\sigma$ is provided by a strong maximum principle (following from the Harnack inequality proved in \cite{GL}). Thus, for $\psi\in C_0^{\infty}(\RN\smallsetminus B_{\sigma}(y))$, $0\leq\psi\leq 1$ with $\psi\equiv 1$ outside $B_{2\sigma}(y)$, we get $\psi(m_\sigma-V_y)^+\leq\psi(m_\sigma-2^{-k_0}(1-u_{\frac{\rho}{k_0}}))^+=0$. Finally, the regularity of $y$ and Remark \ref{urem} imply $\lim_{\O\ni x\rightarrow y}{u_k(x)}=1$ and $V_y(y)=0$. This proves that $V_y$ is a barrier at $y$ for $\O$.
\qed\end{proof}

The previous lemma allows us to consider, without any loss of generality, a domain $D$ satisfying condition \eqref{AI}, and such that the conditions \eqref{DC}, \eqref{PI}, and \eqref{uconn} hold true for any $d$-ball contained in $D$. For such $D$, we can now introduce and exploit the Green function $g$ related to $D$. The main result in \cite[Theorem 3.9]{U} is that
\begin{equation}\label{TGE}
C^{-1}\int_{d(x,y)}^{\dist_d(x,\de D)}\frac{s}{|B_s(x)|}\,\,\dd s\le g(x,y) \le C\, \int_{d(x,y)}^{\dist_d(x,\de D)}\frac{s}{|B_s(x)|}\,\,\dd s,
\end{equation}
for every  $x,y\in D$ \st $0<d(x,y)\le 10^{-2}\dist_d(x,\de D)$. In particular we deduce that $g(x,y)\rightarrow+\infty$ as $d(x,y)\rightarrow 0$ since $\left|B_s(x)\right|\leq c s^{N}$ by the boundedness of the coefficients of the vector fields $X_j$'s in $\overline{D}$. This fact and the continuity of $g$ outside the diagonal (see \cite[Theorem 3.4]{U}) imply also that the Green kernel $g$ is lower semicontinuous in $D\times D$.

From now on, we fix $\O$ as a bounded open set compactly contained in $D$. For $y\in\de\O$ and $\rho>0$ such that $\overline{B_\rho(y)}\subset D$, we denote $K_\rho=\overline{B_\rho(y)}\smallsetminus\O$ and by $u_\rho$ and $\mu_\rho$ the capacitary potential and distribution of the compact set $K_\rho$. More precisely, $u_\rho$ will denote the lower semi-continuous representative of the capacitary potential of $K_\rho$, i.e.
$$u_\rho(x)=\int g(x,y)\,d\mu_\rho(y)$$
(see \cite[Lemma 3.5]{U}). This function is continuous outside $K_\rho$ and in its interior, where it is respectively less than and equal to $1$. By lower semi-continuity it has to be bounded by $1$ everywhere.

We want to prove the counterpart in our setting of some classical characterizations for the regularity in terms of the behavior of $u_\rho$. To do this, we mainly follow the arguments in \cite[Section 5]{FJK}. We stress that, unlike in \cite{FJK}, we are not going to use fine properties of quasi-continuity and capacity. \\
We start with the following lemma, where we denote by $W^{-1}$ the dual space of $W_0^1(D,X)$.

\begin{lemma}\label{primautile} Let $\mu$ be a positive Borel measure compactly supported in $D$. Suppose also $\mu\in W^{-1}$. Then
$$u(x):=\int_D g(x,y)\,d\mu(y)\,\,\in W_0^1(D,X).$$
\end{lemma}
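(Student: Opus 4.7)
The plan is to combine Riesz/Lax-Milgram representation with the reproducing property of the Green function. First, since $\LL$ is symmetric, continuous, and coercive on $W_0^1(D,X)$ (coercivity follows from \eqref{xell} together with the Sobolev inequality \eqref{Sob}), and since the hypothesis $\mu\in W^{-1}$ makes $\phi\mapsto\int\phi\,\dd\mu$ a bounded linear functional on $W_0^1(D,X)$, the Riesz theorem produces a unique $v\in W_0^1(D,X)$ with $\LL(v,\phi)=\int\phi\,\dd\mu$ for every $\phi\in W_0^1(D,X)$. The goal is to show $u=v$ a.e.\ on $D$; note that $u$ is already well-defined as a $[0,+\infty]$-valued, lower semi-continuous Borel function, by the non-negativity and lower semi-continuity of $g$ together with the positivity of $\mu$.

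For the identification, I would test $v$ against functions built out of $g$. For each $f\in C_0^\infty(D)$, the symmetric Green representation from \cite{U} (in the spirit of \cite[Lemma 3.5]{U}) provides
$$w_f(x):=\int_D g(x,z)f(z)\,\dd z\ \in W_0^1(D,X)$$
satisfying $\LL(w_f,\psi)=\int f\psi\,\dd x$ for every $\psi\in W_0^1(D,X)$. Plugging $\phi=w_f$ into the defining equation for $v$ and using the symmetry of $\LL$ (which comes from $B(x)=B(x)^T$), I get
$$\int w_f\,\dd\mu\ =\ \LL(v,w_f)\ =\ \LL(w_f,v)\ =\ \int fv\,\dd x.$$
On the other hand, the symmetry $g(x,z)=g(z,x)$ of the Green kernel together with Tonelli's theorem (applied to $f^\pm$ separately, since $g\ge 0$ and $\mu\ge 0$) yield
$$\int w_f\,\dd\mu\ =\ \int\int g(x,z)f(z)\,\dd z\,\dd\mu(x)\ =\ \int f(z)\,u(z)\,\dd z.$$
Comparing the two displays, $\int(v-u)f\,\dd x=0$ for every $f\in C_0^\infty(D)$. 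Since $v\in L^2(D)$ and $u$ is locally integrable on $D$ (by the compact support of $\mu$ and the upper Green estimate in \eqref{TGE}), this forces $v=u$ a.e.\ on $D$, so $u\in W_0^1(D,X)$.

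The main technical point I expect is securing the reproducing formula $\LL(w_f,\psi)=\int f\psi\,\dd x$ for $f\in C_0^\infty(D)$ in the precise form needed; once that ingredient is available, what remains is just Fubini and symmetry. If the version I need is not directly recorded in \cite{U}, a safety net is to approximate $\mu$ by a sequence $\mu_n$ of compactly supported $L^\infty$ densities with $\mu_n\to\mu$ in $W^{-1}$ (via a suitable mollification preserving compact support inside $D$), apply the known representation for each $\mu_n$, and then pass to the limit in $W_0^1(D,X)$, checking pointwise convergence of $\int g(\cdot,y)\,\dd\mu_n(y)\to u$ by monotone or dominated convergence.
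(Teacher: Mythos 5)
Your argument is correct and is essentially the paper's own proof: Lax--Milgram produces $v\in W_0^1(D,X)$ with $\LL(v,\phi)=\langle\mu,\phi\rangle$ for all $\phi\in W_0^1(D,X)$, and the identification $u=v$ a.e.\ is obtained by testing against Green potentials of smooth (or $L^p$) densities and using the symmetry of $\LL$ and $g$ plus Fubini--Tonelli, which is exactly the argument the paper delegates to \cite[Lemma 3.5]{U}. The only step you should make explicit is the identification $\langle\mu,w_f\rangle_{W^{-1},W_0^1}=\int w_f\,\dd\mu$ for the test functions $w_f$ (an a.e.-defined $W_0^1$ element paired against a measure that may charge Lebesgue-null sets), i.e.\ the continuity of $w_f$ together with the extension \eqref{utuno} of the representation from $C_0^1(D)$ to $C(\overline{D})\cap W_0^1(D,X)$ --- precisely the point the paper covers by arguing as in \cite[Lemma 3.5]{U}.
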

\begin{proof} By the $X$-ellipticity and Lax-Milgram Theorem, there exists (a unique) $v\in W_0^1(D,X)$ such that $\LL(v,\phi)=\left\langle \mu,\phi\right\rangle_{W^{-1},W_0}$ for any $\phi\in W_0^1(D,X)$. Moreover, if $\phi\in C_0^1(D)$, we have $\LL(v,\phi)=\int\phi d\mu$. By arguing as in the proof in \cite[Lemma 3.5]{U}, we actually have
\begin{equation}\label{utuno}
\LL(v,\phi)=\int\phi\,d\mu\quad \mbox{for every }\phi\in C(\overline{D})\cap W_0^1(D,X)
\end{equation}
and $\int v(x)h(x)\,dx=\int u(x)h(x)\,dx$ for any $h\in L^p$ (for a fixed $p>\frac{Q}{2}$: we remind that $u\in L^{p'}$ by \cite[Theorem 3.4]{U}). Since $v\in W_0^1(D,X)$, we get $u=v$ and the assertion.
\qed\end{proof}

The following proposition is crucial to our aim.

\begin{proposition}\label{superpp} Let $\mu\in W^{-1}$ be a positive Borel measure compactly supported in $D$. Let us consider
$u(x)=\int_D g(x,y)\,d\mu(y)$ (which belongs to $W_0^1(D,X)$ for the last lemma), and suppose $u$ is bounded. Then
$$u(y)\geq\liminf_{x\rightarrow y}\tilde{u}(x)\qquad\mbox{for any function }\tilde{u}=u\mbox{ almost everywhere}.$$
\end{proposition}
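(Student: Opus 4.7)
The plan is to reduce the desired inequality to the limit identity $\lim_{r\to 0^+}\bint_{B_r(y)} u\,\dd x = u(y)$, and then prove this limit directly from the Green function representation of $u$.

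\emph{Reduction.} For any representative $\tilde u=u$ a.e.\ and every $\delta>0$, one has the elementary bound
\[
\inf_{B_\delta(y)\setminus\{y\}}\tilde u \;\le\; \bint_{B_\delta(y)}\tilde u \;=\; \bint_{B_\delta(y)} u,
\]
since the infimum of a function over a set never exceeds its average, and the average is insensitive to modifications of $\tilde u$ on the null set where it differs from $u$. Letting $\delta\to 0^+$ gives $\liminf_{x\to y}\tilde u(x)\le\liminf_{r\to 0^+}\bint_{B_r(y)} u$, so the task reduces to showing the full limit $\bint_{B_r(y)} u\to u(y)$.

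\emph{Setup.} If $\mu(\{y\})>0$, the lower bound in (\ref{TGE}) gives $g(y,y)=+\infty$, so $u(y)=+\infty$ and the claim is trivial; the boundedness of $u$ actually excludes this case, and we henceforth assume $\mu(\{y\})=0$. By Fubini,
\[
\bint_{B_r(y)} u(x)\,\dd x = \int_D \bar g_r(y,z)\,\dd\mu(z), \qquad \bar g_r(y,z):=\bint_{B_r(y)} g(x,z)\,\dd x,
\]
and we split the $z$-integration into the far part $D\setminus B_{2r}(y)$ and the near part $B_{2r}(y)$.

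\emph{Far and near contributions.} For the far part, the continuity of $g(\cdot,z)$ at $y$ (it is $\L$-harmonic near $y$, since $z\notin B_{2r}(y)$) combined with a Harnack-type bound $g(x,z)\le C\,g(y,z)$ for $x\in B_r(y)$ uniformly in $z\notin B_{2r}(y)$ (a consequence of (\ref{TGE})) yields $\bar g_r(y,z)\to g(y,z)$, and dominated convergence with the $\mu$-integrable dominator $C\,g(y,z)$ gives $\int_{D\setminus B_{2r}(y)}\bar g_r\,\dd\mu\to u(y)$. For the near part, (\ref{TGE}) together with $|B_s(x)|\ge c\,s^Q$ yields $g(x,z)\le C/d(x,z)^{Q-2}$; integrating over $B_r(y)\subseteq B_{3r}(z)$ via the layer-cake formula gives $\int_{B_r(y)} g(x,z)\,\dd x\le C\,r^2$ for all $z\in B_{2r}(y)$, hence
\[
\int_{B_{2r}(y)}\bar g_r(y,z)\,\dd\mu(z) \;\le\; \frac{C\,r^2}{|B_r(y)|}\,\mu(B_{2r}(y)).
\]
The finiteness $u(y)<\infty$, paired with the lower bound $g(y,z)\ge c\,d(y,z)^{-(Q-2)}$ from (\ref{TGE}), forces $\mu(B_{2r}(y))=o(r^{Q-2})$ as $r\to 0^+$, and since $|B_r(y)|\ge c\,r^Q$ the right-hand side is $o(1)$. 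By the same token $\int_{B_{2r}(y)} g(y,z)\,\dd\mu(z)\to 0$, so the near contributions cancel and $\bint_{B_r(y)} u\to u(y)$ as $r\to 0^+$, completing the proof.

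\emph{Main obstacle.} The technical core is the near-diagonal estimate $\mu(B_{2r}(y))=o(r^{Q-2})$ together with the bound $\int_{B_r(y)} g(x,z)\,\dd x\le C\,r^2$ valid uniformly for $z\in B_{2r}(y)$; both depend delicately on the two-sided Green function estimate (\ref{TGE}), and the boundedness of $u$ is essential in precluding atoms of $\mu$ at $y$ and ensuring $u(y)<\infty$.
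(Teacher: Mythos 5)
Your reduction to the statement $\liminf_{x\to y}\tilde u(x)\le\liminf_{r\to 0^+}\bint_{B_r(y)}u$ is fine, and your far/near splitting is a reasonable strategy (different from the paper's), but the near-diagonal estimates you rely on are not valid in the generality of this paper, and they are exactly what the argument hinges on: without the near part tending to $0$ you only get $\liminf_r\bint_{B_r(y)}u\ge u(y)$, which is the trivial direction. Concretely, \eqref{TGE} does \emph{not} give $g(y,z)\ge c\,d(y,z)^{-(Q-2)}$ nor $\int_{B_r(y)}g(x,z)\,dx\le Cr^2$: doubling only yields the one-sided bound $|B_s|\ge c\,s^Q$, while from reverse doubling one only gets $|B_s|\le C s^\mu$ with $\mu<2$, so the blow-up of $g$ near the diagonal is governed by $\int_{d(x,z)}^{R}\frac{s}{|B_s|}\,ds$ and not by a power $d(x,z)^{2-Q}$ (two-sided Ahlfors regularity $|B_s|\approx s^Q$ is precisely the restriction this paper and \cite{U} are written to avoid). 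Your lower bound on $g(y,\cdot)$ therefore fails, so $\mu(B_{2r}(y))=o(r^{Q-2})$ is unjustified; and your upper bound fails twice: first, the pointwise majorant $d(x,z)^{2-Q}$ need not even be locally integrable (the layer-cake sum $\sum_k (r2^{-k})^{2-Q}|B_{r2^{-k}}(z)|$ diverges whenever $\mu\le Q-2$), and second, even integrating the true estimate \eqref{TGE} over $B_r(y)$ gives $\int_{B_r(y)}g(x,z)\,dx\le C\bigl(r^2+|B_{3r}(z)|\int_{3r}^{R}\frac{s}{|B_s(z)|}\,ds\bigr)$, which is in general only $O(r^\mu)$, not $O(r^2)$ (the $Cr^2$ bound in the paper's Proposition \ref{cone} is obtained only because there the Green function of a small domain $D_r$ of diameter comparable to $r$ is used). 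Your scheme can be repaired, but only by redoing both bounds in terms of ball measures: estimate the near part by $C\bigl(\frac{r^2}{|B_r(y)|}+\int_{3r}^{R}\frac{s}{|B_s(y)|}\,ds\bigr)\mu(B_{2r}(y))$ and $\mu(B_{2r}(y))\le C\bigl(\int_{2r}^{R}\frac{s}{|B_s(y)|}\,ds\bigr)^{-1}\int_{B_{2r}(y)}g(y,z)\,d\mu(z)$, and then use doubling to cancel the two integral factors; the remaining factor $\int_{B_{2r}(y)}g(y,z)\,d\mu(z)$ tends to $0$ by dominated convergence once $\mu(\{y\})=0$, which follows from $u(y)<\infty$ and $g(y,y)=+\infty$ (itself justified via $|B_s|\le c\,s^N$, again not via a $d^{2-Q}$ bound). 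A smaller inaccuracy: the uniform bound $g(x,z)\le C\,g(y,z)$ for $x\in B_r(y)$, $z\notin B_{2r}(y)$ is a consequence of the invariant Harnack inequality of \cite{GL} applied to the nonnegative $\L$-harmonic function $g(\cdot,z)$, not of \eqref{TGE}.

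For comparison, the paper's proof avoids all near-diagonal volume analysis: it truncates $g(y,\cdot)$ by $h_a=F_a(g(y,\cdot))$, writes $-\L h_a=f_a\ge0$ with compact support shrinking to $\{y\}$, proves $\int h_a\,d\mu=\int u f_a$ by the symmetry of $\LL$, and shows $\int f_a=1$ through the identity $1=v_a(y)=a\,\cc(J_a)$ for the capacitary potential of the level set $J_a=\{g(y,\cdot)\ge a\}$; then $u(y)=\lim_a\int \tilde u f_a\ge\liminf_{x\to y}\tilde u(x)$ because the $f_a\,dx$ are probability measures concentrating at $y$. In other words, the paper replaces your Lebesgue ball averages by intrinsically normalized capacitary averages, which is what makes the argument insensitive to the absence of Ahlfors regularity.
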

\begin{proof} For any fixed $a>0$, we put $F_a(t)=t$, if $t\leq a$, $F_a(t)=t-\frac{1}{4a}(t-a)^2$, if $a\leq t\leq 3a$, and $F_a(t)=2a$ if $t\geq 3a$. We consider $h_a=F_a(g(y,\cdot))$ as in \cite[Section 8]{LSW}. We have $h_a\in W_0^1(D,X)\cap C(\overline{D})$. By monotone convergence we also get $u(y)=\lim_{a\rightarrow+\infty}\int h_a(z)d\mu(z)$. In the distributional sense, $-\L h_a=f_a\in L^1$, where $f_a(t)$ can be thought as $\frac{1}{2a}\left\langle B(x)\nabla g(y,x),\nabla g(y,x)\right\rangle$, if $a\leq g(y,x)\leq 3a$, and it vanishes elsewhere. We note that $f_a\in L^1$ by the $X$-ellipticity and the fact that $g(y,\cdot)$ is in $W^1$ outside any neighborhood of \{$y$\}. Moreover $f_a\geq 0$ and it is supported in a compact $K\subset D$. We claim that
$$\int h_a(z)\,d\mu(z)=\int u(x)f_a(x)\,dx.$$
We know that $\LL(h_a,\phi)=\int\phi(x) f_a(x)dx$ for any $\phi\in C_0^1(D)$. Let $\phi\in L^{\infty}\cap W_0^1(D,X)$. If we take $\psi\in C_0^{\infty}(D)$ with $0\leq\psi\leq 1$ and $\psi\equiv 1$ in a neighborhood of $K$, then we have $(1-\psi)\phi\in W_0^1(D\smallsetminus K,X)$ and $\LL(h_a,(1-\psi)\phi)=0$. On the other hand, the mollifiers $(\psi\phi)_{\frac{1}{n}}$ converge both uniformly and in $W_0^1$ to $\psi\phi$ in $K$ (see \cite[Proposition 1.4]{FSSC}). Since $\LL(h_a,(\psi\phi)_{\frac{1}{n}})=\int (\psi\phi)_{\frac{1}{n}}f_a$, we get $\LL(h_a,\psi\phi)=\int\phi f_a$. Thus $\LL(h_a,\phi)=\int\phi f_a$ for all $\phi\in L^{\infty}\cap W_0^1(D,X)$, and in particular for $\phi=u$. By (\ref{utuno}) and the symmetry of $\LL$ in $W_0^1\times W_0^1$, we get $\int h_a(z)\,d\mu(z)=\LL(u,h)=\LL(h,u)=\int u(x)f_a(x)\,dx$ and the claim is proved. Therefore
$$u(y)=\lim_{a\rightarrow+\infty}\int u(x)f_a(x)\,dx.$$
Now we can follow closely the arguments in \cite{FJK}. Let us put $J_a=\{x\in D\,:\,g(y,x)\geq a\}$. The set $J_a$ is compact and we denote by $v_a$ and $\nu_a$ respectively its capacitary potential and distribution. Since $v_a(x)=\int g(x,z)d\nu_a(z)$ is continuous at $y\in\rm{int}(J_a)$ and $\de J_a\subseteq\{x\in D\,:\,g(y,x)= a\}$, we have $1=v_a(y)=a\cc(J_a)$. We also have $v_a(x)=\frac{1}{a}g(y,x)$ outside $J_a$, since they solve the same Dirichlet problem for $\L$ in $D\smallsetminus J_a$ (see Remark \ref{urem}, and note that $g(y,\cdot)$ is continuous up to the boundary of $D\smallsetminus J_a$). So we get $\frac{1}{a}=\LL(v_a,v_a)=\frac{1}{a^2}\int_{D\smallsetminus J_a}\left\langle B(x)\nabla g(y,x),\nabla g(y,x)\right\rangle$, which implies $\int f_a=1$. Hence, we finally deduce that $u(y)=\lim_{a\rightarrow+\infty}\int \tilde{u}(x)f_a(x)\,dx\geq \liminf_{x\rightarrow y}\tilde{u}(x)$ for any function $\tilde{u}=u$ almost everywhere.
\qed\end{proof}

We are now in the position to state and prove the following characterizations of the regularity of a boundary point.

\begin{proposition} Let $y\in\de\O$. We have
\begin{equation}\label{riffxty}
\textstyle
y\mbox{ is regular} \qquad \mbox{iff}\qquad \lim_{\O\ni x\rightarrow y}{u_\rho(x)}=1\mbox{ for all }\rho>0,
\end{equation}
\begin{equation}\label{unodidue}
y\mbox{ is regular} \qquad \mbox{iff}\qquad u_\rho(y)=1\mbox{ for all }\rho>0,
\end{equation}
\begin{equation}\label{duedidue}
y\mbox{ is not regular} \qquad\mbox{iff}\qquad \lim_{\rho\rightarrow 0^+}u_\rho(y)=0.
\end{equation}
\end{proposition}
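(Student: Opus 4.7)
My plan is to prove the three equivalences in the order stated, leaning on three already-established ingredients: the local character of regularity (Lemma \ref{indue} with Remark \ref{urem}), the lower semicontinuity of $u_\rho$ combined with $u_\rho\le 1$, and the Green representation paired with Proposition \ref{superpp}.

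For \eqref{riffxty} the forward direction reduces to passing to the auxiliary domain $\Omega_\rho:=D\setminus K_\rho$: since $\Omega\cap B_\rho(y)=\Omega_\rho\cap B_\rho(y)$, the first half of the proof of Lemma \ref{indue} (which, as the authors explicitly note, does not use compact containment in $D$) shows $y$ is regular for $\Omega_\rho$, and Remark \ref{urem} identifies $u_\rho$ with the $\BB$-solution in $\Omega_\rho$ of a continuous boundary datum equal to $1$ near $K_\rho$ and $0$ near $\de D$, so $u_\rho(x)\to 1$ as $x\to y$. For the converse, fixing any $\rho_k\downarrow 0$ I construct the barrier
\[
V_y:=\sum_{k\ge 2}2^{-k}(1-u_{\rho_k}),
\]
along the lines of the second half of the proof of Lemma \ref{indue}. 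This $V_y$ belongs to $W^1(\Omega)$ (each summand is bounded by $1$ with $W^1$-norm controlled in $D$ by $\sqrt{\cc(K_{\rho_k})}$, and the series of $2^{-k}$-weights converges absolutely) and satisfies $\L V_y=0$ in $\Omega$ by interchanging $\LL$ with the sum; dominated convergence in the sum, together with the hypothesis $u_{\rho_k}(x)\to 1$, yields $V_y(x)\to 0$ as $x\to y$. For Definition \ref{barriera}(iii), given $\sigma>0$ pick $k_0$ with $\rho_{k_0}<\sigma/2$ so that the strong maximum principle (following from the Harnack inequality in \cite{GL}) yields $u_{\rho_{k_0}}\le 1-\eta$ on $\overline{\Omega}\setminus B_\sigma(y)$ for some $\eta>0$; then for any $\psi\in C_0^\infty(\RN\setminus B_\sigma(y))$ and $0<m<2^{-k_0}\eta$ one has $V_y\ge 2^{-k_0}\eta>m$ on $\overline{\Omega}\setminus B_\sigma(y)$ and $\psi\equiv 0$ on $B_\sigma(y)$, so $\psi(m-V_y)^+\equiv 0$ on $\Omega$. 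Proposition \ref{eqbarreg} then gives the regularity of $y$.

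Equivalence \eqref{unodidue} follows immediately from \eqref{riffxty} combined with the lower semicontinuity of $u_\rho$ and the bound $u_\rho\le 1$: regularity gives $u_\rho(x)\to 1$ whence $u_\rho(y)=\liminf_{x\to y} u_\rho(x)=1$; conversely $u_\rho(y)=1$ forces $\liminf u_\rho(x)\ge 1$ by LSC, and together with $u_\rho\le 1$ this gives $\lim u_\rho(x)=1$, hence regularity by \eqref{riffxty}.

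For \eqref{duedidue} the easy direction is immediate from \eqref{unodidue}. For the converse, monotonicity of $u_\rho(y)$ in $\rho$ (from $K_{\rho_1}\subseteq K_{\rho_2}\Rightarrow u_{\rho_1}\le u_{\rho_2}$) gives that $L:=\lim_{\rho\to 0^+}u_\rho(y)$ exists in $[0,1)$. To show $L=0$, observe that $\cc(K_\rho)\to 0$ as $\rho\to 0^+$ (a cutoff estimate around $y$ exploiting $Q>2$ gives $\cc(K_\rho)\le\cc(\overline{B_\rho(y)})\to 0$); through \eqref{Sob} this forces $u_\rho\to 0$ in $W_0^1(D,X)$, so the pointwise monotone limit $v:=\lim_{\rho\to 0^+}u_\rho$ vanishes a.e.\ in $D$. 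The polarity of $\{y\}$ (namely $\cc(\{y\})=0$) allows $v$ to extend across $y$ as the zero weak solution in $D$, and Proposition \ref{superpp} applied to the Green representation $u_\rho(y)=\int g(y,z)\,d\mu_\rho(z)$ finally transfers this to $L=0$. The main obstacle of the plan is precisely this last step: upgrading the a.e.\ vanishing of $v$ to the pointwise vanishing $u_\rho(y)\to 0$ at the single boundary point $y$ is the essential content of the Kellogg-type zero-one dichotomy, and requires carefully balancing the LSC-representatives $u_\rho(y)=\liminf_{x\to y}u_\rho(x)$ against the blow-up of $g(y,\cdot)$ as the supports of $\mu_\rho$ concentrate at $y$.
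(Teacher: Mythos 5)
Your treatment of \eqref{riffxty} is essentially the paper's own argument (first half of Lemma \ref{indue} plus Remark \ref{urem} for the forward direction, the barrier $\sum_k 2^{-k}(1-u_{\rho_k})$ with Proposition \ref{eqbarreg} for the converse) and is fine. The first genuine problem is in \eqref{unodidue}: you claim the forward direction ``follows immediately'' from \eqref{riffxty}, lower semicontinuity and $u_\rho\le 1$, writing $u_\rho(y)=\liminf_{x\to y}u_\rho(x)$. But lower semicontinuity gives only $u_\rho(y)\le\liminf_{x\to y}u_\rho(x)$, so from regularity it yields nothing beyond the trivial bound $u_\rho(y)\le 1$. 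What you need is the \emph{reverse} inequality $u_\rho(y)\ge\liminf_{\O\ni x\to y}u_\rho(x)$, and this is exactly the identity \eqref{inf} in the paper, obtained by first checking $\mu_\rho\in W^{-1}$ and $u_\rho\le 1$ and then applying Proposition \ref{superpp} to the representative $\tilde u$ equal to $u_\rho$ on $D\smallsetminus K_\rho$ and to $1$ on $K_\rho$. You list Proposition \ref{superpp} among your ingredients but never actually deploy it in this step; as written, the half ``$y$ regular $\Rightarrow u_\rho(y)=1$'' is unproved (the converse half is indeed pure LSC plus $u_\rho\le 1$).

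The second and more serious gap is the hard direction of \eqref{duedidue}, which you yourself flag as ``the main obstacle'' but do not resolve, and the route you sketch cannot close it. Since $\cc(K_\rho)\to 0$ as $\rho\to 0^+$ \emph{irrespective} of whether $y$ is regular, one always has $u_\rho\to 0$ in $W_0^1(D,X)$ and a.e., while at a regular point $u_\rho(y)\equiv 1$; hence the a.e.\ vanishing of the monotone limit carries no information about $L=\lim_{\rho\to 0^+}u_\rho(y)$, and Proposition \ref{superpp} only bounds $u_\rho(y)$ from \emph{below} by an interior liminf, which is the wrong direction for proving $L=0$. The missing content is the FJK-type comparison carried out in the paper: from non-regularity pick $\rho_0$ with $u_{\rho_0}(y)<1$; use \eqref{limru} (a consequence of $\cc(\{y\})=0$, hence $\mu_{\rho_0}(\{y\})=0$) to split $u_{\rho_0}=v+u$ with $v(y)\le\e$ and $u$ continuous at $y$ with $u\le\frac12(1+u_{\rho_0}(y))$ near $y$; show $h=\frac12(1-u_{\rho_0}(y))u_\tau-v$ satisfies $h^+\in W_0^1(D\smallsetminus K_\tau,X)$ and is a weak subsolution there, so the maximum principle gives $v\ge\frac12(1-u_{\rho_0}(y))u_\tau$ off $K_\sigma$; finally \eqref{inf} together with Proposition \ref{superpp} applied at $y$ yields $\e\ge\frac12(1-u_{\rho_0}(y))u_\tau(y)$ for all small $\tau$, i.e.\ $u_\tau(y)\to 0$. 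Without this comparison step your proposal does not establish \eqref{duedidue}, which is precisely the characterization the Wiener criterion (Theorem \ref{thwin}) and the cone criterion rely on.
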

\begin{proof} We start with \eqref{riffxty}. Suppose $y$ is regular. For the first part of the proof of Lemma \ref{indue}, $y$ is also regular for $\O_0=D\smallsetminus K_\rho$, for any fixed $\rho>0$. Then, Remark \ref{urem} implies $\lim_{\O\ni x\rightarrow y}{u_\rho(x)}=1$. Viceversa, if we suppose $u_\rho\rightarrow 1$ for every $\rho$, we can use the barrier $V_y:=\sum_{k=2}^{+\infty}2^{-k}(1-u_{\frac{\rho}{k}})$ as in the second part of Lemma \ref{indue}.\\
In order to prove \eqref{unodidue}, we first note that the measure $\mu_\rho$ belongs to $W^{-1}$. In fact we have $\left|\int\phi d\mu_\rho\right|=\left|\LL(u_\rho,\phi)\right|\leq C\left\|u_\rho\right\|_{W_0^1}\left\|\phi\right\|_{W_0^1}$ for any $\phi\in C_0^1(D)$. Moreover $u_\rho$ is bounded by $1$. Hence we can apply Proposition \ref{superpp} with $\tilde{u}=u_\rho$ in $D\smallsetminus K_\rho$ and equal to $1$ in $K_\rho$. We deduce that, for all positive $\rho$, $u_\rho(y)\geq\liminf_{D\smallsetminus K_\rho\ni x\rightarrow y}\tilde{u}(x)=\liminf_{D\smallsetminus K_\rho\ni x\rightarrow y}u_\rho(x)$. On the other hand, the lower semicontinuity provides the opposite inequality. Hence we get
\begin{equation}\label{inf}
u_\rho(y)=\liminf_{\O\ni x\rightarrow y}u_\rho(x).
\end{equation}
Thus, (\ref{unodidue}) follows from (\ref{riffxty}) and the fact that $u_\rho\leq 1$.\\
Let us turn to (\ref{duedidue}). If $\lim_{\rho\rightarrow 0^+}u_\rho(y)=0$, then $y$ is certainly not regular by (\ref{unodidue}). Viceversa, if we suppose that $y$ is not regular, for some $\rho_0$ we have $u_{\rho_0}(y)<1$. We are going to adapt the arguments in \cite[Lemma 5.7]{FJK}. First we recognize that
\begin{equation}\label{limru}
\lim_{r\rightarrow 0^+}\int_{B_r(y)}g(y,z)\,d\mu_\rho(z)=0.
\end{equation}
This holds true by dominated convergence and by the fact that $\cc(\{y\})=0$ (see \cite[Proposition 3.6]{U}) which implies $\mu_\rho(\{y\})=0$. Thus, if we fix $\e>0$, there exists $\sigma<\rho_0$ such that $\int_{\overline{B_\sigma(y)}}g(y,z)\,d\mu_\rho(z)\leq\e$. Put $u_{\rho_0}(x)=\int_{\overline{B_\sigma(y)}}g(x,z)\,d\mu_\rho(z)+\int_{D\smallsetminus\overline{B_\sigma(y)}}g(x,z)\,d\mu_\rho(z)=v(x)+u(x)$. We know that $v(y)\leq\e$ and $v,u\in W_0^1(D,X)$ since we can apply Lemma \ref{primautile}. Moreover, $u(y)\leq u_{\rho_0}(y)<1$ and $u$ is continuous in $y$. Hence, there exists $\tau_0<\frac{\sigma}{2}$ such that $u\leq\frac{1}{2}(1+u_{\rho_0}(y))$ in $B_{2\tau_0}(y)$. We fix $\tau\leq\tau_0$ and we put $h(x)=\frac{1}{2}(1-u_{\rho_0}(y))u_\tau(x)-v(x)\in W_0^1(D,X)$. Let us take a $C_0^{\infty}$-function $\psi$ such that $0\leq\psi\leq 1$, $\psi\equiv 1$ in a neighborhood of $D\smallsetminus B_{2\tau}(y)$, and $\psi\equiv 0$ in a neighoborhood of $K_\tau$. Therefore $\psi h^+\in W_0^1(D\smallsetminus K_\tau,X)$ and
$$(1-\psi)h^+\leq (1-\psi)\left(\frac{1}{2}(1-u_{\rho_0}(y))-u_{\rho_0}+\frac{1}{2}(1+u_{\rho_0}(y))\right)^+
=(1-\psi)(1-u_{\rho_0})^+\in W_0^1(D\smallsetminus K_\tau,X)$$
since $u_{\rho_0}$ can be approximated in $W^1$-norm by a sequence of functions which are identically $1$ in $K_{\rho_0}\supseteq K_\tau$. Thus, $h^+\in W_0^1(D\smallsetminus K_\tau,X)$. On the other hand
$$\LL(h,\phi)=-\int_{\overline{B_\sigma(y)}}\phi\,d\mu_\rho\leq 0\qquad \mbox{for any }\phi\in C_0^1(D\smallsetminus K_\tau),\,\phi\geq 0,$$
i.e. $h$ is a weak subsolution of $\L u=0$. By the maximum principle proved in \cite[Theorem 3.1]{GL}, we get $\sup_{D\smallsetminus K_\tau} h^+\leq \sup_{\de(D\smallsetminus K_\tau)}h^+=0$ that is $v\geq\frac{1}{2}(1-u_{\rho_0}(y))u_\tau$ almost everywhere in $D\smallsetminus K_\tau$. But, in $D\smallsetminus K_\sigma$, $v$ and $u_\tau$ are continuous and so $v\geq\frac{1}{2}(1-u_{\rho_0}(y))u_\tau$ always in $D\smallsetminus K_\sigma$. Hence, by (\ref{inf}), we have $\liminf_{\O\ni x\rightarrow y}v(x)\geq\liminf_{\O\ni x\rightarrow y}\frac{1}{2}(1-u_{\rho_0}(y))u_\tau(x)=\frac{1}{2}(1-u_{\rho_0}(y))u_\tau(y)$. On the other hand, if we set $E=\{x \in K_\tau\,:\,v\geq\frac{1}{2}(1-u_{\rho_0}(y)) \}$, we have $\left|K_\tau\smallsetminus E\right|=0$. By considering $\tilde{v}=v$ in $(D\smallsetminus K_\tau)\cup E$ and equal to $1$ elsewhere, we can apply Proposition \ref{superpp} and deduce that $\e\geq v(y)\geq\liminf_{x\rightarrow y}\tilde{v}(x)=\liminf_{D\smallsetminus K_\tau\ni x\rightarrow y}v(x)$ since $v(x)>\e\geq v(y)$ for $x\in E$ (if $\e<\frac{1}{2}(1-u_{\rho_0}(y))$). In conclusion we get $\e\geq\frac{1}{2}(1-u_{\rho_0}(y))u_\tau(y)$ for any $\tau\leq\tau_0$ which proves the desired implication.
\qed\end{proof}

The characterization given by \eqref{duedidue} will be crucial also to get the regularity for boundary points with an exterior cone-type property. We will prove this fact at the end of the next section.

\section{Wiener's integral and regularity}\label{www}

In this section we finally prove our Wiener criterion. In the following lemma we try to extrapolate the essential tools in order to avoid the quasi-continuity issue for the capacitary potentials.

\begin{lemma}\label{mezzo} Let $0<r<\rho$. Then, in our notations we have
\begin{itemize}
\item[(i)] $\mu_\rho(K_r)\leq\cc(K_r)$\qquad and
\item[(ii)] there exists a positive constant $C$ (independent of $r, \rho$) such that
$$\cc(K_{\frac{r}{4}})\leq\mu_\rho(K_r)+C\cc(K_{\frac{r}{4}})u_\rho(y).$$
\end{itemize}
\end{lemma}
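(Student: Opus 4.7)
For part (i) the idea is to use the symmetry of $\LL$ together with the extension of the identity $\LL(u_0,\phi)=\int\phi\,d\mu_0$ to every $\phi\in W_0^1(D,X)\cap C(\overline{D})$, which is available from the proof of Lemma \ref{primautile}. Using \cite[Proposition 3.1]{U} followed by the truncation $\phi\mapsto(\phi\wedge 1)\vee 0$, I would first produce sequences $\phi_j,\psi_j\in\LIPO(D)$, both in $[0,1]$, with $\phi_j\equiv 1$ on $K_r$ and $\psi_j\equiv 1$ on $K_\rho$, and converging respectively to $u_r$ and $u_\rho$ in $W^1(D,X)$. Pairing then gives
$$\LL(u_\rho,\phi_j)=\int\phi_j\,d\mu_\rho\geq\mu_\rho(K_r)\qquad\text{and}\qquad\LL(u_r,\psi_j)=\int\psi_j\,d\mu_r=\cc(K_r),$$
the first inequality from $\phi_j\geq\chi_{K_r}$ pointwise, and the second equality from $\spt\mu_r\subseteq K_r\subseteq K_\rho$, which forces $\psi_j\equiv 1$ on $\spt\mu_r$. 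Letting $j\to\infty$ by the $W^1$-continuity of $\LL$ and using $\LL(u_\rho,u_r)=\LL(u_r,u_\rho)$ yields (i).

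For part (ii) I would split the Green representation of $u_\rho$ as $u_\rho=v+w$ with
$$v(x)=\int_{K_r}g(x,z)\,d\mu_\rho(z),\qquad w(x)=\int_{D\smallsetminus K_r}g(x,z)\,d\mu_\rho(z).$$
Both belong to $W_0^1(D,X)$ by applying Lemma \ref{primautile} to the positive restricted measures (which remain in $W^{-1}$ with comparable norms). The decisive geometric observation is that, $\O$ being open, $\de K_\rho\cap B_r(y)\subseteq\de\O\cap\overline{B_r(y)}\subseteq K_r$; hence $\mu_\rho|_{D\smallsetminus K_r}$ is supported in $D\smallsetminus B_r(y)$, and so $w$ is $\L$-harmonic and nonnegative in $B_r(y)$. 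The Harnack inequality of \cite[Theorem 4.1]{GL} then provides a constant $C=C(D)>0$ with
$$\sup_{B_{r/2}(y)}w\leq Cw(y)\leq Cu_\rho(y),$$
the last step using $v(y)\geq 0$.

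Assuming $Cu_\rho(y)<1$ (otherwise (ii) is trivial), my goal is to use $(1-Cu_\rho(y))^{-1}v$ as a competitor for $\cc(K_{r/4})$. Approximating $v$ in $W^1$ by LIPO functions truncated to $[0,1]$ and applying Lemma \ref{primautile} to $\mu_\rho|_{K_r}$ yields $\LL(v,v)\leq\mu_\rho(K_r)$, whence $\cc(K_{r/4})(1-Cu_\rho(y))^2\leq\mu_\rho(K_r)$, and expanding $(1-x)^2\geq 1-2x$ produces (ii) with constant $2C$. The main obstacle, which I expect to be the delicate step, is justifying $v\geq 1-Cu_\rho(y)$ on $K_{r/4}$ in the $W_0^1$ sense required by the definition of capacity: pointwise it is immediate from $u_\rho=1$ a.e.\ on $K_\rho$ and the Harnack bound on $w$, but one needs a genuine Lipschitz approximating sequence. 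To build one, I would combine the $\psi_j\to u_\rho$ from part (i) with an approximation $\omega_j\to w$ in $W^1$, $\omega_j\in\LIPO(D)$, modifying the latter via a Lipschitz cutoff $\eta$ supported in $B_{r/2}(y)$ with $\eta\equiv 1$ on $K_{r/4}$ by setting
$$\omega_j':=\omega_j\wedge\bigl((1-\eta)\omega_j+\eta(Cu_\rho(y)+\varepsilon)\bigr).$$
The continuity of $w$ in $B_r(y)$ together with the Harnack bound $w\leq Cu_\rho(y)$ on $B_{r/2}(y)$ ensures $\omega_j'\to w$ in $W^1(D,X)$ (by the $W^1$-continuity of the lattice operations), while by construction $\omega_j'\leq Cu_\rho(y)+\varepsilon$ on $K_{r/4}$. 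Then $\psi_j-\omega_j'\in\LIPO(D)$ is $\geq 1-Cu_\rho(y)-\varepsilon$ on $K_{r/4}$ and converges to $v$ in $W^1(D,X)$; letting $\varepsilon\to 0^+$ closes the argument.
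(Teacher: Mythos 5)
Your part (i) is essentially the paper's own argument: approximate $u_r$ and $u_\rho$ by $\LIPO(D)$ functions equal to $1$ on the respective compact sets, pair with the capacitary distributions, and use the symmetry $\LL(u_r,u_\rho)=\LL(u_\rho,u_r)$; nothing to add there.

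Your part (ii) is correct but takes a genuinely different route. The paper first proves the intermediate inequality \eqref{firb}, $\cc(K_{\frac{r}{4}})\le\mu_\rho(K_r)+\int_{K_\rho\smallsetminus K_r}u_{\frac{r}{4}}\,d\mu_\rho$, by computing the mutual energy $\LL(u_\rho,u_{\frac{r}{4}})$ in two ways (mollifying $\psi u_{\frac{r}{4}}$ to justify the pairing), and then controls the remaining integral by applying the Harnack inequality to $g(x,\cdot)$, which is $\L$-harmonic and nonnegative in $B_r(y)$ for each $x\in K_\rho\smallsetminus K_r$, so that $g(x,\xi)\le C g(x,y)$ for $\xi\in K_{\frac{r}{4}}$, producing the term $C\cc(K_{\frac{r}{4}})u_\rho(y)$. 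You instead split $u_\rho=v+w$ into near and far Green potentials of $\mu_\rho$ (exactly the splitting the paper itself uses later, in the proof of \eqref{duedidue}), note correctly that the far measure charges nothing inside $B_r(y)$ so that $w$ is nonnegative and $\L$-harmonic there, apply Harnack to get $w\le Cu_\rho(y)$ on $B_{r/2}(y)$, and then feed the normalized near potential into the variational definition of $\cc(K_{\frac{r}{4}})$ together with $\LL(v,v)\le\mu_\rho(K_r)$; the two uses of Harnack are dual to each other (integrating the paper's pointwise bound against the far part of $\mu_\rho$ gives your bound on $w$), and your final constant $2C$ instead of $C$ is immaterial. What each approach buys: the paper stays entirely at the level of mutual energies, so it never needs to certify that a potential is bounded below on $K_{\frac{r}{4}}$ in the $W_0^1$ sense, and the only technical point is the mollification in \eqref{firb}; your route avoids \eqref{firb}, but must (a) check that restrictions of $\mu_\rho$ remain in $W^{-1}$ — easy, by the same estimate the paper uses for $\mu_\rho$ itself, $|\int\phi\,d(\mu_\rho|_E)|\le\int|\phi|\,d\mu_\rho=\LL(u_\rho,|\phi|)$ for $\phi\in\LIPO(D)$ plus density — and (b) carry out the cutoff construction showing $v\ge 1-Cu_\rho(y)-\e$ on $K_{\frac{r}{4}}$ in the $W_0^1(D,X)$ sense, which leans on the continuity of the lattice operations (equivalently of $f\mapsto f^+$) in $W^1(D,X)$: a standard but heavier tool than anything the paper invokes, though still consistent with its stated policy of avoiding quasicontinuity. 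Your competitor construction is sound (writing $\omega_j'=\omega_j-\bigl(\eta(\omega_j-Cu_\rho(y)-\e)\bigr)^+$ makes the required continuity statement transparent, and the a.e.\ bound $w\le Cu_\rho(y)$ on $\{\eta>0\}$ ensures the limit is $w$), so I see no gap, only extra bookkeeping.
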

\begin{proof} Let $\phi_j^\rho$ and $\phi_j^r$ be two sequence of functions in $\rm{Lip}_0(D)$ approaching respectively $u_\rho$ and $u_r$ in the $W^1(D,X)$-norm and such that $\phi_j^\rho\equiv 1$ in $K_\rho$, $\phi_j^r\equiv 1$ in $K_r$. We have $\LL(u_\rho,u_r)=\LL(u_r,u_\rho)$. On one side we get 
$$\LL(u_r,u_\rho)=\lim_{j\rightarrow+\infty}\LL(u_r,\phi_j^\rho)=\lim_{j\rightarrow+\infty}\int_{K_r}\phi_j^\rho\,d\mu_r=\mu_r(K_r)=\cc(K_r)$$
since $K_\rho\supset K_r$. On the other hand
$$\LL(u_\rho,u_r)=\lim_{j\rightarrow+\infty}\LL(u_\rho,\phi_j^r)=\lim_{j\rightarrow+\infty}\int_{K_\rho}\phi_j^r\,d\mu_\rho\geq\lim_{j\rightarrow+\infty}\int_{K_r}\phi_j^r\,d\mu_\rho=\mu_\rho(K_r)$$
since we can assume $\phi_j^r\geq 0$, and $(i)$ is proved.\\
In order to prove $(ii)$, we first show that
\begin{equation}\label{firb}
\cc(K_{\frac{r}{4}})\leq\mu_\rho(K_r)+\int_{K_\rho\smallsetminus K_r}u_{\frac{r}{4}}\,d\mu_\rho.
\end{equation}
We take $\psi\in C_0^\infty(D)$ with $0\leq\psi\leq 1$ and $\psi\equiv 1$ in a compact neighborhood $K$ of $K_\rho$. Then $\LL(u_\rho,u_r)=\LL(u_\rho,\psi u_r)$. Let us consider the mollifiers $h_j=(\psi u_\frac{r}{4})_{\frac{1}{j}}$. Hence we get
\begin{eqnarray*}
\LL(u_\rho,u_{\frac{r}{4}})&=&\lim_{j\rightarrow +\infty}\LL(u_\rho,h_j)=\lim_{j\rightarrow +\infty}\int_{K_\rho\smallsetminus K_r}h_j d\mu_\rho + \int_{K_r}h_j d\mu_\rho \\
&\leq& \mu_\rho(K_r) + \lim_{j\rightarrow +\infty}\int_{K_\rho\smallsetminus K_r}h_j d\mu_\rho = \mu_\rho(K_r) + \int_{K_\rho\smallsetminus K_r}u_{\frac{r}{4}} d\mu_\rho
\end{eqnarray*}
since $h_j\rightarrow \psi u_\frac{r}{4}$ in $W^1(K,X)$, $h_j\leq 1$, and $\psi u_\frac{r}{4}$ is continuous in a neighborhood of $K_\rho\smallsetminus K_r$. As in the first part, we have also $\LL(u_{\frac{r}{4}},u_\rho)=\cc(K_\frac{r}{4})$ and (\ref{firb}) is proved. Now we note that, if $x\in K_\rho\smallsetminus K_r$, the $\L$-harmonicity of the Green function and the Harnack inequality in \cite{GL} provide that $g(x,\xi)\leq Cg(x,y)$ for any $\xi\in K_\frac{r}{4}$. Thus we get 
$$\int_{K_\rho\smallsetminus K_r}u_{\frac{r}{4}}\,d\mu_\rho(x)=\int_{K_\rho\smallsetminus K_r}\int_{K_{\frac{r}{4}}}g(x,\xi)\,d\mu_{\frac{r}{4}}(\xi)\,d\mu_\rho(x)\leq C\cc(K_{\frac{r}{4}})u_\rho(y)$$
and the proof is complete.
\qed\end{proof}

Let us now give the full statement of our main result and conclude the proof.

\begin{theorem}\label{thwin} A point $y\in\de\O$ is $\L$-regular if and only if
$$\int_0^{\dist(y,\de D)}\cc(K_\rho)\frac{\rho}{\left|B_\rho(y)\right|}\,d\rho=+\infty.$$ 
\end{theorem}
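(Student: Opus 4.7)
The plan is to pass to the equivalent formulation offered by \eqref{duedidue}: $y$ is regular iff $u_\rho(y)$ does \emph{not} tend to $0$ as $\rho\to 0^+$. I would then establish two-sided control of $u_\rho(y)$ by the Wiener integral. The starting representation is $u_\rho(y)=\int g(y,x)\,d\mu_\rho(x)$, which together with \eqref{TGE} gives, after Fubini,
$$u_\rho(y)\,\asymp\,\int_0^{R_0}\frac{s}{|B_s(y)|}\,\mu_\rho(\{x\in K_\rho:\,d(x,y)\le s\})\,ds\,+\,O(\mu_\rho(K_\rho)),$$
for a fixed $R_0>0$ depending on $\dist(\overline{\O},\de D)$. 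Here the discrepancy between $|B_s(x)|$ and $|B_s(y)|$ in \eqref{TGE} is absorbed via the doubling property, since points in $K_\rho$ are at $d$-distance at most $\rho$ from $y$. Note that $\mu_\rho(\{d(x,y)\le s\})$ equals $\mu_\rho(K_s)$ for $s\le\rho$ and equals $\mu_\rho(K_\rho)=\cc(K_\rho)$ for $s>\rho$.

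For the implication \emph{``Wiener integral converges $\Rightarrow$ $y$ is not regular''}, insert Lemma \ref{mezzo}$(i)$ into the upper bound:
$$u_\rho(y)\le C\int_0^\rho \frac{s\,\cc(K_s)}{|B_s(y)|}\,ds+C\,\cc(K_\rho)\int_\rho^{R_0}\frac{s}{|B_s(y)|}\,ds+O(\cc(K_\rho)).$$
Under the convergence hypothesis, the first integral is a tail of a convergent integral, hence infinitesimal as $\rho\to 0$. The other terms vanish because $\cc(K_\rho)\to 0$; this latter fact follows from the bound $\cc(K_\rho)\le\cc(\overline{B_\rho(y)})\lesssim |B_{2\rho}(y)|/\rho^2\to 0$, which holds at small scales thanks to $Q>2$ and \eqref{DC}. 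Thus $u_\rho(y)\to 0$, and \eqref{duedidue} concludes that $y$ is not regular.

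For the converse, assume $y$ is not regular, so that $u_\rho(y)\to 0$. For $\rho\le\rho_1$ small we have $u_\rho(y)\le 1/(2C_0)$, where $C_0$ is the constant of Lemma \ref{mezzo}$(ii)$; that lemma then yields $\cc(K_{r/4})\le 2\mu_\rho(K_r)$ for every $r\le\rho$. Keeping only the $s\le\rho$ portion in the lower bound obtained in the first step:
$$u_\rho(y)\,\ge\,c\int_0^\rho \frac{s\,\mu_\rho(K_s)}{|B_s(y)|}\,ds\,\ge\,\frac{c}{2}\int_0^\rho \frac{s\,\cc(K_{s/4})}{|B_s(y)|}\,ds.$$
The change of variable $t=s/4$, combined with $|B_{4t}(y)|\le A^2|B_t(y)|$, converts the right-hand side into $c'\int_0^{\rho/4}\frac{t\,\cc(K_t)}{|B_t(y)|}\,dt$. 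Since $u_\rho(y)\to 0$, convergence of the Wiener integral near $0$ follows; local integrability on $[\rho_1/4,\dist(y,\de D)]$, where $|B_t(y)|$ is bounded below and $\cc(K_t)$ is bounded above, completes the argument.

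The main technical obstacle is the Fubini step of the first paragraph: carrying out the exchange rigorously while passing from the $|B_s(x)|$ appearing in \eqref{TGE} to the $|B_s(y)|$ of the Wiener integral, and checking that the hypothesis $d(x,y)\le 10^{-2}\dist(x,\de D)$ of \eqref{TGE} holds throughout the relevant range of $s$ (which is guaranteed by $\overline\O\subset\subset D$ and $\rho$ small). Once that is settled, the factor $1/4$ in Lemma \ref{mezzo}$(ii)$ is absorbed painlessly by a single application of \eqref{DC}.
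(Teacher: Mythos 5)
Your route is the paper's own: reduce via \eqref{duedidue}, use \eqref{TGE} to show that $u_\rho(y)$ is comparable to $\cc(K_\rho)\int_\rho^{R}\frac{s}{|B_s(y)|}\,ds+\int_0^\rho\frac{s}{|B_s(y)|}\mu_\rho(K_s)\,ds$, then Lemma \ref{mezzo}$(i)$ for one implication and Lemma \ref{mezzo}$(ii)$ plus doubling for the other; your converse direction is essentially identical to the paper's and is correct. The genuine gap is in the forward implication, in the sentence ``the other terms vanish because $\cc(K_\rho)\to 0$''. The factor $\int_\rho^{R_0}\frac{s}{|B_s(y)|}\,ds$ diverges as $\rho\to 0^+$ --- this is precisely the blow-up of $g(y,\cdot)$ at its pole noted right after \eqref{TGE} --- so $\cc(K_\rho)\int_\rho^{R_0}\frac{s}{|B_s(y)|}\,ds$ is a $0\cdot\infty$ product and the decay of $\cc(K_\rho)$ alone proves nothing. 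Your auxiliary estimate $\cc(K_\rho)\lesssim |B_{2\rho}(y)|/\rho^2$ does not rescue it: the doubling condition \eqref{DC} with $Q>2$ gives a \emph{lower} bound $|B_{2\rho}(y)|\gtrsim \rho^{Q}$, not an upper bound $o(\rho^2)$ (in the uniformly elliptic case in $\R^2$, for instance, $|B_{2\rho}|/\rho^{2}$ is bounded away from zero), and even when that ratio is bounded the product with the divergent integral need not vanish: when $K_\rho$ occupies a fixed proportion of the ball the product stays bounded below, which is exactly why such points are regular (Proposition \ref{cone}).

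The repair is short but it is the one delicate point of this direction, and it uses the monotonicity of $\rho\mapsto\cc(K_\rho)$: given $\e>0$ choose $\delta_0$ with $\int_0^{\delta_0}\cc(K_s)\frac{s}{|B_s(y)|}\,ds\le\e$; then for $\rho<\delta_0$ one has $\cc(K_\rho)\int_\rho^{\delta_0}\frac{s}{|B_s(y)|}\,ds\le\int_\rho^{\delta_0}\cc(K_s)\frac{s}{|B_s(y)|}\,ds\le\e$, while $\cc(K_\rho)\int_{\delta_0}^{R}\frac{s}{|B_s(y)|}\,ds\to 0$ because the integral is now a fixed finite constant and $\cc(K_\rho)\to 0$ --- a fact that should be quoted from \cite[Proposition 3.6]{U}, as the paper does, rather than derived from the ball-capacity bound. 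With this splitting your upper bound yields $u_\rho(y)\le C(\e+\e M)$-type smallness, which is exactly the paper's $\e$--$\delta_0$ argument; the rest of your proposal (the Fubini derivation of the two-term decomposition in place of the paper's dyadic summation by parts, absorbing $|B_s(x)|$ versus $|B_s(y)|$ by doubling, and the factor $1/4$ handled through $A^2$) matches the published proof.
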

\begin{proof} Put $R=\dist(y,\de D)$. By (\ref{duedidue}), the statement is equivalent to the following
$$\lim_{\rho\rightarrow 0^+}u_\rho(y)=0\qquad \mbox{iff }\qquad \int_0^{R}\cc(K_\rho)\frac{\rho}{\left|B_\rho(y)\right|}\,d\rho <+\infty.$$
For small $\rho$, the estimates on the Green function (\ref{TGE}) show that $u_\rho(y)$ behaves like $$\int_{K_\rho}\int_{d(x,y)}^R\frac{s}{\left|B_s(y)\right|}\,ds\,d\mu_\rho(x).$$ 
This quantity is, up to constants, equivalent to $$\sum_{j=0}^{+\infty}\int_{2^{-j}\rho}^R\frac{s}{\left|B_s(y)\right|}\,ds\left(\mu_\rho(K_{\frac{\rho}{2^{j}}})-\mu_\rho(K_{\frac{\rho}{2^{j+1}}})\right).$$ The summation by parts and (\ref{limru}) imply then
\begin{equation}\label{tutto}
u_\rho(y)\mbox{ is equivalent to }\cc(K_\rho)\int_\rho^R\frac{s}{\left|B_s(y)\right|}\,ds+\int_0^\rho\frac{s}{\left|B_s(y)\right|}\mu_\rho(K_s)\,ds.
\end{equation}
Suppose first that $M=\int_0^{R}\cc(K_s)\frac{s}{\left|B_s(y)\right|}\,ds <+\infty$. For any positive $\e$ there exists $\delta_0>0$ such that $\int_0^{\delta_0}\cc(K_s)\frac{s}{\left|B_s(y)\right|}\,ds\leq\e$. Moreover, since $s\mapsto \cc(K_s)$ is increasing and tends to $0$ as $s\rightarrow 0^+$ (see \cite[Proposition 3.6]{U}), there exists $0<\delta<\delta_0$ such that $\cc(K_s)\leq\e\cc(K_{\delta_0})$ for every $s <\delta$. By $(i)$ in Lemma \ref{mezzo} and (\ref{tutto}) we get, for $\rho<\delta$, that $u_\rho(y)$ is controlled from above by  $$\int_\rho^{\delta_0}\cc(K_s)\frac{s}{\left|B_s(y)\right|}\,ds+\e\int_{\delta_0}^{R}\cc(K_s)\frac{s}{\left|B_s(y)\right|}\,ds+\e\leq2\e+\e M.$$
Therefore $u_\rho(y)\rightarrow 0$.\\
Viceversa, suppose $\lim_{\rho\rightarrow 0^+}u_\rho(y)=0$. Take $\rho_0>0$ such that $u_\rho(y)\leq \frac{1}{2C}$ for $\rho\leq\rho_0$, where $C$ is the constant appearing in $(ii)$ of Lemma \ref{mezzo}. That lemma infers that, for any $r<\rho<\rho_0$, we have $\cc(K_{\frac{r}{4}})\leq 2\mu_\rho(K_r)$. By the assumption and (\ref{tutto}), we have also $\int_0^\rho\frac{s}{\left|B_s(y)\right|}\mu_\rho(K_s)\,ds<+\infty$. Therefore, for $\rho<\rho_0$, by the doubling property we get
$$\int_0^{\frac{\rho}{4}}\cc(K_s)\frac{s}{\left|B_s(y)\right|}\,ds\leq 2\int_0^{\frac{\rho}{4}}\mu_\rho(K_{4s})\frac{s}{\left|B_s(y)\right|}\,ds\leq 	\frac{A^2}{8}\int_0^{\rho}\mu_\rho(K_s)\frac{s}{\left|B_s(y)\right|}\,ds$$
which is finite. Hence also $\int_0^{R}\cc(K_s)\frac{s}{\left|B_s(y)\right|}\,ds <+\infty$. 
\qed\end{proof}

We finally observe that the $X$-ellipticity condition and the definition of capacity imply that $\cc(K_\rho)$ is in fact equivalent to
$$
 \inf\left\{\left\|Xu\right\|_2^2\,|\,u\in W_0^1(D,X),\,u\ge 1\text{ on }K_\rho\text{ in the $W_0^1(D,X)$ sense}\right\}.
$$
This quantity depends just on the vector fields $X_1,\ldots,X_m$. That is why the following corollary can be directly deduced from our Wiener criterion.

\begin{corollary}\label{corind} For a fixed system of vector fields $X$, any operator in the class of the $X$-elliptic operators defined by (\ref{nomeoper})-(\ref{xell}) has the same regular points for $\O$. In particular the $\L$-regularity of the boundary points of $\O$ does not depend on the coefficients $b_{ij}$ of $\L$ (but only on the vector fields $X$).
\end{corollary}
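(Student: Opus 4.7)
The plan is to reduce the corollary directly to Theorem \ref{thwin} by comparing the $\L$-capacity with an intrinsic capacity associated only to the vector fields. As the paragraph preceding the corollary already observes, the whole content of the argument lies in the elementary comparison of bilinear forms furnished by $X$-ellipticity; no further analytic machinery is needed.

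First, I would introduce the intrinsic (``sub-Riemannian'') capacity
\[
\cc_X(K)=\inf\Bigl\{\|Xu\|_2^2\,\Bigl|\,u\in W_0^1(D,X),\ u\ge 1\text{ on }K\text{ in the }W_0^1(D,X)\text{ sense}\Bigr\},
\]
which depends only on the family $X$ (and on the fixed domain $D$), not on the coefficients $b_{ij}$ of $\L$. The admissible class of competitors in the variational problem defining $\cc_X(K)$ is exactly the one defining $\cc(K)=\cc_\L(K)$ in Section \ref{cp3}; only the functional being minimized changes.

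Second, I would show that $\cc_\L(K)$ and $\cc_X(K)$ are equivalent with constants depending only on the ellipticity parameters $\lambda,\Lambda$ in \eqref{xell}. Indeed, for any admissible competitor $u$, the pointwise bounds in \eqref{xell} together with $|Xu|^2=\sum_j(X_ju)^2$ yield
\[
\lambda\int_D|Xu|^2\,\dd x\ \le\ \LL(u,u)\ \le\ \Lambda\int_D|Xu|^2\,\dd x.
\]
Passing to the infimum over admissible $u$ gives
\[
\lambda\,\cc_X(K)\ \le\ \cc_\L(K)\ \le\ \Lambda\,\cc_X(K)
\]
for every compact $K\subset D$, and in particular for every $K_\rho=\overline{B_\rho(y)}\setminus\O$.

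Third, I would invoke Theorem \ref{thwin}: a point $y\in\de\O$ is $\L$-regular if and only if
\[
\int_0^{\dist(y,\de D)}\cc_\L(K_\rho)\,\frac{\rho}{|B_\rho(y)|}\,d\rho=+\infty.
\]
The comparison of the two capacities implies that the divergence of this integral is equivalent to the divergence of
\[
\int_0^{\dist(y,\de D)}\cc_X(K_\rho)\,\frac{\rho}{|B_\rho(y)|}\,d\rho,
\]
a quantity in which the coefficients $b_{ij}$ no longer appear. Since this latter condition depends only on $X$, $\O$ and $D$, it selects the same regular boundary points for every operator $\L$ in the class \eqref{nomeoper}--\eqref{xell} with the same underlying family $X$. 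There is no real obstacle here, as the argument is a direct corollary of the Wiener criterion; the only point to be careful about is that the admissible class used to define $\cc_X$ coincides with the one for $\cc_\L$, which is immediate from our definitions.
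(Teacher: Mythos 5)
Your argument is correct and coincides with the paper's own: the paragraph preceding the corollary makes exactly the same observation, namely that the $X$-ellipticity bounds give $\lambda\,\cc_X(K)\le\cc(K)\le\Lambda\,\cc_X(K)$ over the same admissible class, so the divergence of the Wiener integral in Theorem \ref{thwin} is insensitive to the coefficients $b_{ij}$. Nothing further is needed.
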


We conclude with a geometric criterion for the regularity of a boundary point.

\begin{proposition}\label{cone} Let $\Omega$ be an open set compactly contained in $D$ and let $y\in\de\O$. Suppose that there exist $r_1,\,\theta>0$ such that 
$$ |B_r(y)\setminus \O|=\left|K_r\right|\ge\theta|B_r(y)|\qquad\mbox{for any }r\in]0,r_1].$$
Then $y$ is $\L$-regular for any $X$-elliptic operator $\L$.
\end{proposition}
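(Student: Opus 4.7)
The plan is to invoke the Wiener criterion of Theorem \ref{thwin}: under the density hypothesis it suffices to show the divergence of
$$
\int_{0}^{\dist(y,\de D)}\cc(K_\rho)\frac{\rho}{|B_\rho(y)|}\,d\rho.
$$
The heart of the argument is a capacitary lower bound of the form
$$
\cc(K_\rho)\,\geq\,c(\theta)\,\frac{|B_\rho(y)|}{\rho^{2}}\qquad\text{for all $\rho\leq r_1$ small enough.}
$$
Granted this estimate, the Wiener integral is bounded below by $c(\theta)\int_0^{r_1}d\rho/\rho=+\infty$, so Theorem \ref{thwin} gives the $\L$-regularity of $y$. The claim for \emph{every} $X$-elliptic operator then follows at once from Corollary \ref{corind}.

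To prove the capacity estimate, I would take an arbitrary admissible $u\in W_0^1(D,X)$ (so that $u\geq 1$ on $K_\rho$ in the $W_0^1$-sense) and pass to the truncation $\bar u=\min(u,1)^+\in W_0^1(D,X)$, which satisfies $0\leq\bar u\leq 1$, $\bar u=1$ a.e.\ on $K_\rho$ and $\|X\bar u\|_2\leq\|Xu\|_2$. The essential tool is the local $L^2$ Sobolev--Poincar\'e inequality on $B_{2\rho}(y)$ (a standard consequence of \eqref{DC}, \eqref{PI} and \eqref{Sob} via the Haj\l asz--Koskela chaining argument):
$$
\Bigl(\fint_{B_{2\rho}}|\bar u-\bar u_{B_{2\rho}}|^{2Q/(Q-2)}\Bigr)^{\!(Q-2)/(2Q)}\,\leq\,C\rho\Bigl(\fint_{B_{4\rho}}|X\bar u|^2\Bigr)^{\!1/2}.
$$
If $\bar u_{B_{2\rho}}\leq 1/2$, then on $K_\rho\subset B_\rho\subset B_{2\rho}$ we have $|\bar u-\bar u_{B_{2\rho}}|\geq 1/2$, and the density $|K_\rho|\geq\theta|B_\rho|$ combined with \eqref{DC} yields, after rearrangement, $\|X\bar u\|_{L^2(B_{4\rho})}^{2}\geq c(\theta)|B_\rho|/\rho^{2}$.

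The main obstacle is the opposite case $\bar u_{B_{2\rho}}>1/2$, in which the average of $\bar u$ is too close to $1$ for the preceding step to be applied directly on $B_{2\rho}$. The remedy is to pass to the smallest scale $R\geq 2\rho$ for which $\bar u_{B_R}\leq 1/2$; such an $R$ exists because $\bar u\in W_0^1(D,X)$, extended by zero, satisfies $\bar u_{B_R}\to 0$ as $R$ approaches and exceeds $\dist(y,\de D)$, so the balls $B_R$ eventually contain a non-negligible portion of $D^c$. At this scale the local Sobolev--Poincar\'e inequality applies, and the density control $|K_\rho|/|B_R|\geq c\theta(\rho/R)^{Q}$ (which follows from \eqref{DC}) together with the bound $R\leq \operatorname{const}(D,y)$ allows one to close the estimate with constants depending on $D$ and $y$ alone. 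Combining the two cases furnishes the capacity lower bound, and the Wiener integral divergence concludes the proof.
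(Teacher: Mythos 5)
Your route is genuinely different from the paper's (which never invokes Theorem \ref{thwin}: it verifies criterion \eqref{duedidue} directly, comparing the capacitary potential $u_r$ with the Green potential $v_r(x)=r^{-2}\int_{K_r}g_r(x,z)\,dz$ on an auxiliary domain $D_r$ squeezed between $B_{303r}(y)$ and $B_{404r}(y)$, so that all Green-function estimates take place at the single scale $r$), and unfortunately the difference is not cosmetic. Your central estimate $\cc(K_\rho)\ge c(\theta)\,|B_\rho(y)|/\rho^{2}$, with $\cc$ the capacity relative to the \emph{fixed} domain $D$, is false in the generality of this paper. Indeed $\cc\bigl(\overline{B_\rho(y)}\bigr)\simeq\bigl(\int_\rho^{R}s\,|B_s(y)|^{-1}ds\bigr)^{-1}$ by \eqref{TGE} (cf.\ the proof of Proposition \ref{superpp}, where the level sets $J_a$ of $g(y,\cdot)$ have $\cc(J_a)=1/a$ and are comparable to balls). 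Now take vector fields for which the balls around $y$ grow like $s^{2}$ at small scales while the global doubling exponent $Q=\log_2A$ is strictly larger (e.g.\ Grushin-type fields $\partial_{x},\ |x|^{\alpha}\partial_{y'}$ in $\R^2$, with $y$ away from the degenerate line): then even for the full ball, $\cc(K_\rho)\le\cc\bigl(\overline{B_\rho(y)}\bigr)\simeq 1/\log(1/\rho)\to0$, whereas $|B_\rho(y)|/\rho^{2}\simeq1$. The Euclidean isocapacitary heuristic fails because $Q$ is only an \emph{upper} volume exponent, so the Sobolev exponent $2Q/(Q-2)$ need not match the actual growth of $|B_s(y)|$. (The regularity conclusion is of course unaffected: in this example the Wiener integrand is $\simeq 1/(\rho\log(1/\rho))$ and still diverges; it is your pointwise capacity bound that breaks.)

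The breakdown is located exactly at your second case $\bar u_{B_{2\rho}}>1/2$, which is the relevant one (in the example above the capacitary potential is close to $1$ on $B_{2\rho}$). Passing to the first scale $R\ge 2\rho$ with $\bar u_{B_R}\le 1/2$ and applying the Sobolev--Poincar\'e inequality there yields only $\|X\bar u\|_2^2\gtrsim R^{-2}|B_R|\bigl(|K_\rho|/|B_R|\bigr)^{(Q-2)/Q}$ (or $\gtrsim |B_R|/R^2$ by telescoping the means); to upgrade this to $c(\theta)|B_\rho|/\rho^2$ you would need $|B_R|/|B_\rho|\gtrsim(R/\rho)^{Q}$, while doubling gives only the opposite inequality and the available lower bound carries the reverse-doubling exponent $\mu<2$. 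So the constant degenerates as $\rho\to0$ for $R$ of unit size; it does not depend on $(D,y)$ alone, and no choice of $R$ repairs this. What is true, and what a proof via Theorem \ref{thwin} would require, is the scale-invariant statement that the capacity of $K_\rho$ \emph{relative to a ball or domain of radius comparable to} $\rho$ is at least $c(\theta)|B_\rho|/\rho^{2}$ (equivalently, $\cc(K_\rho)\ge c(\theta)\cc\bigl(\overline{B_\rho(y)}\bigr)$); establishing this is essentially what the paper's comparison of $u_r$ with $\tilde v_r$ on $D_r$, via \eqref{TGE}, the maximum principle, \eqref{inf} and \eqref{duedidue}, accomplishes. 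Your first case is fine as far as it goes, and note that the final appeal to Corollary \ref{corind} is superfluous, since both the hypothesis and the would-be capacity bound are already coefficient-free.
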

\begin{proof} Let us fix $r_0>0$ such that $\overline{B_{4040r_0}(y)}\subset D$. For any $0<r<r_0$, there exists $D_r$ satisfying condition \eqref{AI} with $B_{303r}(y)\subset D_r \subset B_{404r}(y)$ (by \cite[Lemma 3.8]{U}). We denote by $g_r$ the Green function of $D_r$ and we consider the function
$$v_r(x)=\frac{1}{r^2}\int_{K_r}{g_r(x,z)\,dz}.$$
We note that for any $x\in B_{2r}(y)$ we have $K_r\subset B_{3r}(x)$, and $d(x,z)<3r<10^{-2}\rm{dist}(x,\de D_r)$ for every $x\in B_{2r}(y)$ and $z\in B_{3r}(x).$ By \cite[Theorem 3.9]{U}, $g_r$ satisfies the bounds in \eqref{TGE} for some $C$ independent of $r$. In order to obtain a bound for $v_r$, we are going to use the following inequalities
$$\frac{\left|B_r\right|}{\left|B_s\right|}\geq \frac{1}{A}\frac{r^Q}{s^Q},\quad \rm{and } \quad\frac{\left|B_r\right|}{\left|B_s\right|}\leq \frac{1}{\beta}\frac{r^\mu}{s^\mu},\quad \rm{for }\quad0<r\leq s.$$
The first inequalities comes from the doubling condition \eqref{DC}, whereas the second one follows from the reverse doubling $\left|B_\rho\right|<\beta\left|B_{2\rho}\right|$ which locally holds true in our Carnot-Carath\'eodory setting (see e.g. \cite[Section 2]{DGL}). Here $\mu=\log_2{\frac{1}{\beta}}$ can be assumed less than $2$, since $\beta<1$ can be always thought close to $1$. Thus, for any $x\in B_{2r}(y)$, we get
\begin{eqnarray*}
v_r(x) &\leq& \frac{C}{r^2}\int_{B_{3r}(x)}\int_{d(x,z)}^{\rm{dist}(x,\de D_r)}\frac{s}{\left|B_s(x)\right|}ds\,dz = \frac{C}{r^2}\left(\int_{0}^{3r}s\,ds + \int_{3r}^{\rm{dist}(x,\de D_r)}\frac{s\left|B_{3r}(x)\right|}{\left|B_s(x)\right|}ds\right)\\
&\leq&\frac{C}{r^2}\left(\frac{9}{2}r^2+\frac{(3r)^\mu}{\beta}\int_{3r}^{406r}s^{1-\mu}ds\right)=C\left(\frac{9}{2}+\frac{(3)^\mu}{\beta(2-\mu)}\left((406)^{2-\mu}-3^{2-\mu}\right)\right)=:C_1.
\end{eqnarray*}
On the other hand, by exploiting the cone-type condition and assuming $r\leq r_1$, we deduce
\begin{eqnarray*}
v_r(y)&=&\frac{1}{r^2}\int_{K_r}{g_r(y,z)\,dz}\geq\frac{C}{r^2}\int_{K_r}\int_{d(y,z)}^{\rm{dist}(y,\de D_r)}\frac{s}{\left|B_s(y)\right|}ds\,dz \\
&=& \frac{C}{r^2}\left(\int_{0}^{r}\frac{s\left|K_s\right|}{\left|B_s(y)\right|}ds + \int_{r}^{\rm{dist}(y,\de D_r)}\frac{s\left|K_r\right|}{\left|B_s(y)\right|}ds\right)\geq\frac{C\theta}{r^2}\left(\frac{r^2}{2}+\frac{r^Q}{A}\int_{r}^{\rm{dist}(y,\de D_r)}s^{1-Q}ds\right) \\
&=& C\theta\left(\frac{1}{2}+\frac{1}{A(Q-2)}\left(1-(303)^{2-Q}\right)\right)=:C_2.
\end{eqnarray*}
Hence, we can consider the function $\tilde{v}_r=\frac{v_r}{C_1}\in W_0^1(D_r,X)$ and compare it to the capacitary potential $u_r$ of $K_r$. It is not difficult to see that
$$(\tilde{v}_r-u_r)^+ \in W_0^1(D_r\smallsetminus K_r, X).$$
As a matter of fact, we can take $\psi\in C_0^\infty$, with $\psi\equiv 1$ in an open neighborhood of $K_r$, $0\leq\psi\leq 1$, and supported in $B_{2r}(y)$. Since $(1-\psi)(\tilde{v}_r-u_r)^+\leq (1-\psi)\tilde{v}_r$ and $\psi(\tilde{v}_r-u_r)^+\leq \psi(1-u_r)$, both functions belong to $W_0^1(D_r\smallsetminus K_r, X)$.\\
By the $\L$-harmonicity, we get $\tilde{v}_r\leq u_r$ in $D_r\smallsetminus K_r$ from the maximum principle \eqref{MP} (and from the continuity outside $K_r$). Therefore
$$u_r(y)=\liminf_{\O\ni x\rightarrow y}u_r(x)\geq\liminf_{\O\ni x\rightarrow y}\tilde{v}_r(x)\geq\tilde{v}_r(y),$$
where the last inequality follows from the lower-semicontinuity of $\tilde{v}_r$ and the first equality is our relation \eqref{inf}. We have thus obtained that $u_r(y)\geq \frac{C_2}{C_1}$ for any small positive $r$. The characterization \eqref{duedidue} gives the regularity of $y\in\de\O$ and concludes the proof.
\qed\end{proof}


\end{document}